\newcommand{\ol}[1]{\overline{#1}}
\newcommand{\Sch}{\mathbf{Sch}}
\newcommand{\ft}{\mathrm{ft}}
\newcommand{\lft}{\mathrm{lft}}
\newcommand{\sep}{\mathrm{sep}}
\newcommand{\aff}{\mathrm{aff}}
\newcommand{\Ad}{\mathbf{Ad}}
\newcommand{\cV}{\mathcal{V}}
\newcommand{\ad}{\mathrm{ad}}
\DeclareMathOperator{\Spf}{Spf}
\DeclareMathOperator{\Proj}{Proj}
\DeclareMathOperator{\Spec}{Spec}
\DeclareMathOperator*{\colim}{colim}
\DeclareMathOperator{\cofib}{cofib}
\DeclareMathOperator{\fib}{fib}
\DeclareMathOperator{\mycup}{\smallsmile}
\DeclareMathOperator{\Pro}{Pro}
\DeclareMathOperator{\Fun}{Fun}
\newcommand{\Spaces}{\mathbf{Spc}}
\DeclareMathOperator{\Map}{Map}
\newcommand{\an}{\mathrm{an}}
\newcommand{\cont}{\mathrm{cont}}
\newcommand{\op}{\mathrm{op}}
\newcommand{\Sh}{\mathbf{Sh}}
\newcommand{\PSh}{\mathbf{PSh}}
\DeclareMathOperator{\Spa}{Spa}
\newcommand{\bD}{\mathbf{D}}
\newcommand{\prolim}[1]{\underset{#1}{\operatorname{``}\lim \operatorname{''}  \hspace{.2ex}  }}  % pro lim in displayed formulas
\newcommand{\lprolim}[1]{\operatorname{``}\lim \operatorname{''} \hspace{-1.9ex} {}_{#1} \hspace{.5ex} } % pro lim in text
\newcommand{\Sp}{\mathbf{Sp}}
\newcommand{\cO}{\mathcal{O}}
\newcommand{\cU}{{\mathcal U}}
\newcommand{\A}{{\mathbb A}}
\newcommand{\bP}{{\mathbb P}}
\newcommand{\Ab}{\mathbf{Ab}}
\newcommand{\Kan}{K^{\an}}
\def\Kcont{K^{\cont}}
\def\suppi{\;\mathrm{on}\; (\pi)}
\newcommand{\on}{\,\mathrm{on}\,}
\theoremstyle{plain}
\newtheorem{thm}{Theorem}[section]
\newtheorem{lem}[thm]{Lemma}
\newtheorem{lemma}[thm]{Lemma}
\newtheorem{cor}[thm]{Corollary}
\newtheorem{prop}[thm]{Proposition}
\theoremstyle{definition}
\newtheorem{ex}[thm]{Example}
\newtheorem{defn}[thm]{Definition}
\newtheorem{claim}[thm]{Claim}
\title{\textit{K}-theory of non-archimedean rings II}
\author{Moritz Kerz}
\address{Fakult\"at f\"ur Mathematik, Universit\"at Regensburg, 93040 Regensburg, Germany}
\email{moritz.kerz@ur.de}
\author{Shuji Saito}
\address{Graduate School of Mathematical
Sciences,
University of Tokyo,
3-8-1 Komaba,
Tokyo,
Japan}
\email{sshuji@msb.biglobe.ne.jp}
\author{Georg Tamme}
\address{Institut f\"ur Mathematik, Fachbereich 08, Johannes Gutenberg-Universit\"at Mainz, 55099 Mainz, Germany}
\email{georg.tamme@uni-mainz.de}
\thanks{The authors are supported by the DFG through CRC 1085 \textit{Higher Invariants} (Universit\"at Regensburg). 
The second author thanks the Department of Mathematics at the University of
Regensburg
for hospitality while this work was done.
He is supported by JSPS KAKENHI Grant (15H03606). The third author is also supported by the DFG through TRR 326 (Project-ID 444845124)}
\date{\today}
\begin{document}

\begin{abstract}
We study fundamental properties of analytic $K$-theory of Tate rings such as homotopy invariance, Bass fundamental theorem, Milnor excision, and descent for admissible coverings. 
\end{abstract}

\maketitle

\setcounter{tocdepth}{2}
\tableofcontents

\section{Introduction}

This note is a sequel to our note \cite{KSTI}, and we refer to that paper for a general introduction and motivation.
In part I we were mainly concerned with the construction of what we call analytic $K$-theory of Tate rings and a comparison with continuous $K$-theory as proposed by Morrow \cite{morrow-hist}. The main result there was a weak equivalence between continuous and analytic $K$-theory for regular noetherian Tate rings admitting a ring of definition satisfying a certain weak resolution of singularities property. Negative continuous $K$-theory was studied by Dahlhausen in \cite{Dahlhausen}. 

The goal here is to study more closely the properties of analytic $K$-theory. It turns out that analytic $K$-theory has good formal properties, similar to Weibel's homotopy $K$-theory, without any regularity assumption. However, we have to restrict to Tate rings admitting a noetherian, finite dimensional ring of definition. In particular, for such Tate rings we establish the analytic Bass fundamental theorem (Corollary~\ref{cor:BassFT}), pro-homotopy invariance (Corollary~\ref{cor:pro-homotopy-invariance}), Milnor excision (Theorem~\ref{thm:excision}), and descent for rational coverings of the associated adic space (Theorem~\ref{thm:descent}). The latter allows us to globalize the definition of analytic $K$-theory to adic spaces satisfying a  finiteness condition. In addition, we use our results to prove that analytic $K$-theory of such Tate rings is in fact weakly equivalent to the ``$\A^{1}$-localization'' of Morrow's continuous $K$-theory \cite{morrow-hist} (see Corollary~\ref{cor:A1-loc}). We thank the referee for suggesting to do so.

Our main tool is a fibre sequence relating the continuous $K$-theory of a ring of definition, or more generally a Raynaud type model, with the analytic $K$-theory of a Tate ring (Theorems~\ref{thm.weakfibresequence}, \ref{thm:fund-fib-model}).

\subsection*{Notation and conventions}
By a   $\pi$-adic (or just adic) ring we mean a topological ring $A_{0}$ whose topology is the $\pi$-adic one for an element $\pi \in A_{0}$. We usually assume that $A_{0}$ is complete. A topological ring $A$ is called a Tate ring if it admits an open subring $A_{0} \subseteq A$ and an element $\pi \in A_{0}$ such that the subspace topology on $A_{0}$ is the $\pi$-adic one, and $\pi$ is invertible in $A$. Such an $A_{0}$ is called a ring of definition, and $\pi$ is called a pseudo-uniformizer. We have $A = A_{0}[\tfrac{1}{\pi}]$. If $A$ is Tate, then a subring $A_{0} \subseteq A$ is a ring of definition if and only if it is open and bounded. The basic reference is \cite[Sec.~1]{Huber}.

We denote by $\Spaces$ and $\Sp$ the $\infty$-categories of spaces and spectra, respectively. $\Pro(\Sp)$ denotes the $\infty$-category of pro-spectra, see e.g.~\cite[Sec.~2]{KSTI}. A map of pro-spectra $f$ is a weak equivalence, if its level-wise truncation $\tau_{\leq n}f$ is an equivalence of pro-spectra for every integer $n$. See \cite[Sec.~2.2]{KSTI} for a discussion and equivalent characterizations.  Similarly, a sequence $X \to Y \to Z$ of pro-spectra is a weak fibre sequence if it is weakly equivalent to a fibre sequence.

The letter $K$ denotes the nonconnective algebraic $K$-theory functor for rings and quasi-compact, quasi-separated schemes. Its connective covering is denoted by $k$. For the following notations, we refer to  the corresponding definitions in \cite{KSTI}:
\begin{itemize}
\item [-] $K^{\cont}$ for an adic ring: Definition~5.1.
\item [-] $K^{\cont}$ for a Tate ring: Definition~5.3.
\item [-] $k^{\an}$ for an adic ring: Defintion~6.5.
\item [-] $k^{\an}$ for a Tate ring: Definition~6.11. (Warning: in general, this is not the connective cover of $K^{\an}$.)
\item [-] $K^{\an}$ for a Tate ring: Definition~6.15. 
\end{itemize}

\section{Recollections from part I, and complements}

\subsection{Pro-homotopy algebraic $K$-theory}
\label{sec:pro-homotopy-algebraic-K}

In this subsection we introduce a variant of Weibel's homotopy algebraic $K$-theory, which will be an important tool to compare the analytic $K$-theory of a Tate ring with the continuous $K$-theory of a ring of definition without assuming any regularity condition. See \cite[Sec.~6.1]{KSTI} for similar constructions in the setting of complete Tate rings.

Let $R$ be a ring, and let $\pi \in R$ be any element. For non-negative integers $p$, $j$  we define
\[
R[\Delta^{p}_{\pi^{j}} ] := R[t_{0}, \dots, t_{p}]/( t_{0} + \dots + t_{p} - \pi^{j}).
\]
For fixed $j$ and varying $p$, the $R[\Delta^{p}_{\pi^{j}}]$ form a simplicial ring, which we denote by $R[\Delta_{\pi^{j}}]$. Multiplication of the coordinates $t_{i}$ by $\pi$ induces maps of simplicial rings
$R[\Delta_{\pi^{j}}] \to R[\Delta_{\pi^{j-1}}]$, and in this way we obtain the pro-simplicial ring
\[
\prolim{j} R[\Delta_{\pi^{j}}].
\]

Let $\Sch_{R}$ denote the category of quasi-compact, quasi-separated (qcqs) $R$-schemes. Let $F$ be a functor $F \colon \Sch_{R}^{op} \to \Sp$. We define a new functor 
$F^{\pi^{\infty}} \colon \Sch_{R}^{op} \to \Pro(\Sp)$ as follows.
We let
$F^{\pi^{j}}(X)$ be
the geometric realization of the simplicial spectrum $[p] \mapsto F(X \otimes_{R} R[\Delta^{p}_{\pi^{j}}])$ and define
\[
F^{\pi^{\infty}}(X) = \prolim{j} F^{\pi^{j}}(X) \in \Pro(\Sp).
\]

\begin{defn}
Let $X$ be a qcqs $R$-scheme. We define its \emph{pro-homotopy algebraic $K$-theory (with respect to $\pi \in R$)} as the pro-spectrum
\[
K^{\pi^{\infty}}(X).
\]
Similarly we define a version with support on $X/(\pi) = X \otimes_{R} R/(\pi)$:
\[
K^{\pi^{\infty}} (X \on (\pi) )
\]
\end{defn}

For an $R$-scheme $X$ we write $X[t] = X \otimes_{R} R[t]$.
By construction, $K^{\pi^{\infty}}$ satisfies the following form of pro-homotopy invariance:
\begin{lemma}
	\label{lem:pro-HI-K-pi-infty}
For $X \in \Sch_{R}$ the canonical map 
\[
K^{\pi^{\infty}}(X) \to \prolim{t \mapsto \pi t} K^{\pi^{\infty}}(X[t])
\]
is an equivalence of pro-spectra. Similarly for $K^{\pi^{\infty}}(- \on (\pi))$.
\end{lemma}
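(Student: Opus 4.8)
The plan is to exhibit an explicit homotopy inverse to the canonical map and to extract the required homotopies from an elementary algebraic homotopy in the variable $t$. Write $p\colon X[t]\to X$ for the projection, $s\colon X\to X[t]$ for the zero section (induced by $R[t]\to R$, $t\mapsto 0$), and $\sigma\colon R[t]\to R[t]$, $t\mapsto\pi t$. Since $\sigma$ fixes $R\subseteq R[t]$ and carries $t$ into the ideal $(t)$, one checks directly that both $p^{*}$ and $s^{*}$ are compatible with the transition maps of the pro-system indexed by $t\mapsto\pi t$, so that they induce maps
\[
p^{*}\colon K^{\pi^{\infty}}(X)\longrightarrow\prolim{t\mapsto\pi t}K^{\pi^{\infty}}(X[t]),\qquad s^{*}\colon\prolim{t\mapsto\pi t}K^{\pi^{\infty}}(X[t])\longrightarrow K^{\pi^{\infty}}(X),
\]
the first being the map in question. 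From $p\circ s=\mathrm{id}_{X}$ we get $s^{*}\circ p^{*}=\mathrm{id}$, so it remains to show that the other composite $p^{*}\circ s^{*}$ is homotopic to the identity in $\Pro(\Sp)$. As $s\circ p$ is the endomorphism $\epsilon$ of $X[t]$ induced by $R[t]\to R[t]$, $t\mapsto 0$, this composite is $\epsilon^{*}$ applied level-wise, so everything comes down to the claim that $\epsilon^{*}\simeq\mathrm{id}$ on $\prolim{t\mapsto\pi t}K^{\pi^{\infty}}(X[t])$.

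I would first fix $j$ and use the $R$-algebra homomorphism
\[
h\colon R[t]\longrightarrow R[t]\otimes_{R}R[\Delta^{1}_{\pi^{j}}]=R[t,t_{0},t_{1}]/(t_{0}+t_{1}-\pi^{j}),\qquad t\longmapsto t\cdot t_{1},
\]
whose restrictions along the two vertices $(t_{0},t_{1})=(\pi^{j},0)$ and $(0,\pi^{j})$ of $\Delta^{1}_{\pi^{j}}$ are the endomorphisms $\epsilon$ and $\sigma^{j}$ of $R[t]$. Exactly as in the classical proof that Weibel's homotopy $K$-theory $KH=K^{\pi^{0}}$ inverts elementary $\mathbb{A}^{1}$-homotopies (the cosimplicial scheme $[p]\mapsto\Delta^{p}_{\pi^{j}}$ supplies the needed maps $\Delta^{p}_{\pi^{j}}\to\Delta^{1}_{\pi^{j}}$ coming from the monotone maps $[p]\to[1]$), the homotopy $h$ gives rise to $\epsilon^{*}\simeq(\sigma^{*})^{j}$ as self-maps of $K^{\pi^{j}}(X[t])$. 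Moreover $h$ is equivariant for $\sigma$ (which scales only $t$), since $\sigma(h(t))=\pi t\cdot t_{1}=h(\sigma(t))$, so this homotopy is compatible with the transition maps of the tower $\prolim{t\mapsto\pi t}K^{\pi^{j}}(X[t])$.

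Now I would combine these. On the one hand, the identity $\epsilon\circ\sigma^{j}=\epsilon$ of ring endomorphisms gives $\epsilon^{*}=\epsilon^{*}\circ(\sigma^{*})^{j}$. On the other hand, in the pro-system along $t\mapsto\pi t$ the pro-identity is represented, $j$ levels deep, by the very same $j$-fold composite $(\sigma^{*})^{j}$ of transition maps. Together with the homotopy $\epsilon^{*}\simeq(\sigma^{*})^{j}$ and its $\sigma$-equivariance, this shows that the pro-endomorphisms $\epsilon^{*}$ and $\mathrm{id}$ of $\prolim{t\mapsto\pi t}K^{\pi^{j}}(X[t])$ become homotopic after precomposition with $j$ transition maps, hence coincide in $\Pro(\Sp)$; so $p^{*}\colon K^{\pi^{j}}(X)\to\prolim{t\mapsto\pi t}K^{\pi^{j}}(X[t])$ is an equivalence of pro-spectra for every $j$. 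Since $p^{*}$ and $s^{*}$ are induced by ring homomorphisms that leave the simplicial coordinates $t_{i}$ untouched, these equivalences are compatible with the transition maps in $j$, and passing to the limit over $j$---which preserves equivalences---gives the assertion. The support version $K^{\pi^{\infty}}(-\on(\pi))$ is proved in the same way: $p$, $s$, $\epsilon$ and the homotopy $h$ are all compatible with the closed subscheme defined by $(\pi)$, and $X[t]\on(\pi)=(X\on(\pi))[t]$, so the argument applies verbatim.

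The step requiring the most care is the claim that the elementary homotopy $h$ genuinely yields a homotopy in $\Pro(\Sp)$, coherently with both pro-directions at once. The compatibility with the two pro-directions is precisely what the algebraic identities $\epsilon\circ\sigma=\epsilon$ and $\sigma\circ h=h\circ\sigma$ encode, while the underlying rigidity of the construction $F\mapsto F^{\pi^{j}}$ under $\Delta^{1}_{\pi^{j}}$-homotopies is a routine adaptation of the Suslin--Voevodsky/Weibel argument, which one would presumably isolate as a separate lemma.
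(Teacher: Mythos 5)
Your argument is correct and is essentially the proof the paper intends: the paper itself only says the proof is the same as that of \cite[Prop.~6.3]{KSTI}, and that proof is exactly your elementary homotopy $t\mapsto t\cdot t_{1}$ in $R[t]\otimes_{R}R[\Delta^{1}_{\pi^{j}}]$ interpolating between $\epsilon$ and $\sigma^{j}$, played off against the $j$-fold shift along the pro-direction $t\mapsto\pi t$. The one step you rightly flag as delicate is best handled not by asserting that $\epsilon^{*}$ and $\mathrm{id}$ coincide as pro-endomorphisms (which raises ${\lim}^{1}$-type coherence questions), but by checking that $p^{*}$ induces an equivalence on $\Map_{\Pro(\Sp)}(-,T)$ for every spectrum $T$: these mapping spaces are filtered colimits, so the level-wise homotopies and the identities $\epsilon\circ\sigma^{j}=\epsilon$, $\sigma\circ h=h\circ\sigma$ are all that is needed.
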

The \textit{proof} is the same as that of \cite[Prop.~6.3]{KSTI}.
We will use the following easy observation:
\begin{lemma}	
	\label{lem:K-pi-infty-KH}
If $\pi$ is invertible on $X$, there is a natural equivalence 
\[
K^{\pi^{\infty}}(X) \cong KH(X),
\]
where $KH(X)$ denotes Weibel's homotopy $K$-theory spectrum. In particular, for any $X$ there is a fibre sequence of pro-spectra
\[
K^{\pi^{\infty}}(X \on (\pi) ) \to K^{\pi^{\infty}}(X) \to KH(X[\tfrac{1}{\pi}]).
\]
\end{lemma}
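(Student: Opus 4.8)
The plan is to unwind the construction $F\mapsto F^{\pi^{\infty}}$ and to exploit the rescaling $t_i\mapsto\pi^{j}t_i$ of the algebraic simplices. Write $R[\Delta^{p}]:=R[s_0,\dots,s_p]/(s_0+\dots+s_p-1)$ for the standard affine $p$-simplex, so that $KH(X)$ is the geometric realization of $[p]\mapsto K(X\otimes_R R[\Delta^{p}])$ for any $R$-scheme $X$. If $\pi$ is invertible on $X$, then $\pi^{j}$ is a unit on $X$, and the substitution $s_i\mapsto\pi^{-j}t_i$ defines an isomorphism of $X$-schemes $X\otimes_R R[\Delta^{p}]\xrightarrow{\ \sim\ }X\otimes_R R[\Delta^{p}_{\pi^{j}}]$, compatibly with the cosimplicial face and degeneracy maps; moreover one checks directly that under these isomorphisms the transition map $t_i\mapsto\pi t_i$ of the pro-system corresponds to the identity of $R[\Delta^{\bullet}]$. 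Hence $K^{\pi^{j}}(X)\simeq KH(X)$ compatibly in $j$, so the pro-spectrum $K^{\pi^{\infty}}(X)=\prolim{j}K^{\pi^{j}}(X)$ is isomorphic to the constant pro-spectrum on $KH(X)$. This proves the first assertion, and the equivalence is natural in $X$ because each step is.

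For the fibre sequence, recall that there is a fibre sequence of functors $\Sch_R^{\op}\to\Sp$,
\[
K(-\on(\pi))\to K(-)\to K\big((-)[\tfrac{1}{\pi}]\big),
\]
the first term being $K$-theory with supports in $(\pi)$ (Thomason--Trobaugh localization along the quasi-compact open immersion $(-)[\tfrac{1}{\pi}]\hookrightarrow(-)$). The operation $F\mapsto F^{\pi^{\infty}}$ is exact: for fixed $j$ and $p$ it is precomposition with $-\otimes_R R[\Delta^{p}_{\pi^{j}}]$; geometric realization over $[p]$ is a colimit and hence preserves fibre sequences in the stable $\infty$-category $\Sp$; and passing to the pro-object over $j$ carries a levelwise fibre sequence to a fibre sequence in $\Pro(\Sp)$ (see \cite[Sec.~2]{KSTI}). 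Applying $(-)^{\pi^{\infty}}$ to the displayed sequence therefore produces a fibre sequence of pro-spectra
\[
K^{\pi^{\infty}}(X\on(\pi))\to K^{\pi^{\infty}}(X)\to\big(K((-)[\tfrac{1}{\pi}])\big)^{\pi^{\infty}}(X).
\]
Since $(X\otimes_R R[\Delta^{p}_{\pi^{j}}])[\tfrac{1}{\pi}]=X[\tfrac{1}{\pi}]\otimes_R R[\Delta^{p}_{\pi^{j}}]$, the third term is exactly $K^{\pi^{\infty}}(X[\tfrac{1}{\pi}])$, which the first assertion identifies with $KH(X[\tfrac{1}{\pi}])$ because $\pi$ is invertible on $X[\tfrac{1}{\pi}]$.

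There is essentially no obstacle here — this is the ``easy observation'' the authors advertise. The one point that must be verified rather than merely asserted is the compatibility of the rescaling isomorphisms $R[\Delta^{\bullet}_{\pi^{j}}]\cong R[\Delta^{\bullet}]$ with the structure maps $t_i\mapsto\pi t_i$ of the pro-system, which is exactly what makes $K^{\pi^{\infty}}(X)$ literally constant and not merely termwise constant. The exactness of $(-)^{\pi^{\infty}}$ on pro-spectra is a formal consequence of the generalities on $\Pro(\Sp)$ recalled in part~I, and the identification of the third term is a triviality about base change of tensor products.
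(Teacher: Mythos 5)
Your proposal is correct and follows essentially the same route as the paper: the rescaling of coordinates shows the pro-system defining $K^{\pi^{\infty}}(X)$ is constant with value $KH(X)$ when $\pi$ is invertible, and the fibre sequence is obtained by applying $(-)^{\pi^{\infty}}$ to the Thomason--Trobaugh localization sequence for the open immersion $X[\tfrac{1}{\pi}]\hookrightarrow X$. The extra care you take with the compatibility of the rescaling isomorphisms with the transition maps, and with the exactness of $(-)^{\pi^{\infty}}$, is exactly what the paper leaves implicit.
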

\begin{proof}
If $\pi$ is invertible on $X$, then the transition map $R[\Delta^{p}_{\pi^{j}}] \to R[\Delta^{p}_{\pi^{0}}] = R[\Delta^{p}]$ induces an isomorphism
$X\otimes_{R} R[\Delta^{p}] \to X \otimes_{R} R[\Delta^{p}_{\pi^{j}}]$. Thus the
pro-system defining $K^{\pi^{\infty}}(X)$ is constant with value $KH(X)$. The second claim
follows from the first one together with the localization sequence associated to the
 identity map $X \to X[\frac{1}{\pi}]$.
\end{proof}

\subsection{The fundamental fibre sequence}
	\label{sec:fund-fib}

The following weak fibre sequence and its generalization to non-affine models in
Theorem~\ref{thm:fund-fib-model} are our fundamental tool to control the analytic
$K$-theory of Tate rings.   For a $\pi$-adic ring $A_0$ we denote by
  $K^{\cont}(A_0)$ the pro-spectrum  $ \prolim{n} K(A_0/(\pi^n))$.  

\begin{thm}
	\label{thm.weakfibresequence}
Let $A_{0}$ be a complete $\pi$-adic ring, and let $A=A_{0}[1/\pi]$ be the associated Tate ring. Assume that $A_{0}$ is noetherian and of finite Krull dimension. Then there is a weak fibre sequence of pro-spectra
\[
K^{\pi^{\infty}}(A_{0} \on (\pi)) \to K^{\cont}(A_{0}) \to K^{\an}(A).
\]
\end{thm}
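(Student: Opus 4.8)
The plan is to compare the asserted sequence with the localization sequence of Lemma~\ref{lem:K-pi-infty-KH} and to reduce the theorem to the weak cartesianness of a single square of pro-spectra. By Lemma~\ref{lem:K-pi-infty-KH} there is a fibre sequence
\[
K^{\pi^{\infty}}(A_{0} \on (\pi)) \to K^{\pi^{\infty}}(A_{0}) \to KH(A).
\]
I would fit its last map into a commutative square of pro-spectra
\[
\begin{array}{ccc}
K^{\pi^{\infty}}(A_{0}) & \longrightarrow & KH(A)\\
\downarrow & & \downarrow\\
K^{\cont}(A_{0}) & \longrightarrow & K^{\an}(A),
\end{array}
\]
whose bottom map is the natural one from part~I. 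The left vertical map is induced by the reductions $A_{0}[\Delta^{p}_{\pi^{j}}] \to (A_{0}/\pi^{j})[\Delta^{p}_{\pi^{j}}]$ followed by restriction to a vertex (so that on $p$-simplices one lands in $A_{0}/\pi^{j}$ when $p=0$), taken after geometric realization in $p$ and pro-limit in $j$; the right vertical map is induced by the maps from the algebraic affine simplices over $A$ to the affinoid simplices entering the definition of $K^{\an}$. Granting commutativity, and since the horizontal fibres of a homotopy-cartesian square agree, the theorem will follow once the square is shown to be \emph{weakly cartesian}: the top horizontal fibre is $K^{\pi^{\infty}}(A_{0}\on(\pi))$ by Lemma~\ref{lem:K-pi-infty-KH}, the resulting identification of fibres is by construction compatible with the maps down to $K^{\cont}(A_{0})$, and the sequence of the theorem is recovered.

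To prove the square is weakly cartesian it is enough to show that the induced map on horizontal fibres
\[
K^{\pi^{\infty}}(A_{0}\on(\pi)) \longrightarrow \fib\big(K^{\cont}(A_{0}) \to K^{\an}(A)\big)
\]
is a weak equivalence of pro-spectra, and I would check this level-wise in the two pro-/simplicial directions. Unwinding the definitions, for fixed $p$ and $j$ one is comparing $K(A_{0}[\Delta^{p}_{\pi^{j}}]\on(\pi)) := \fib\big(K(A_{0}[\Delta^{p}_{\pi^{j}}]) \to K(A_{0}[\Delta^{p}_{\pi^{j}}][\tfrac{1}{\pi}])\big)$ with its continuous analogue $\prolim{n} K\big((A_{0}/\pi^{n})[\Delta^{p}_{\pi^{j}}]\on(\pi)\big)$, using that $R := A_{0}[\Delta^{p}_{\pi^{j}}]$ is again noetherian of finite Krull dimension and that its $\pi$-adic completion $A_{0}\langle\Delta^{p}_{\pi^{j}}\rangle$ has the same reduction modulo every power of $\pi$. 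The key input is that algebraic $K$-theory, and therefore $K$-theory with support on $V(\pi)$, satisfies pro-cdh descent for noetherian schemes of finite Krull dimension; together with the formal-completion (continuity) statement it implies, this gives a weak equivalence $K(R\on(\pi)) \simeq \prolim{n} K(R/\pi^{n}\on(\pi))$ for such $R$. Assembling these equivalences over $p$ — using the pro-homotopy invariance of Lemma~\ref{lem:pro-HI-K-pi-infty} to control the geometric realizations of the resulting simplicial pro-spectra — and over $j$ yields the weak equivalence above, hence the theorem.

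The main obstacle I anticipate is this last step: keeping track of the two distinct pro-directions — the $\pi^{j}$-scaling of the simplices governing $K^{\pi^{\infty}}$ and $K^{\an}$, and the $\pi^{n}$-adic truncation governing $K^{\cont}$ — and pushing the supported-$K$-theory comparison through the geometric realizations, where naively neither realization nor formation of these fibres commutes with pro-limits. The algebraic heart of that step is the pro-cdh descent theorem of Kerz--Strunk--Tamme, and it is precisely there that the noetherian and finite-dimensional hypotheses on $A_{0}$ are indispensable: without them the two supported pro-spectra need not agree and the argument breaks down.
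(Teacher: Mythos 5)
Your overall strategy---embedding the asserted sequence into a square whose top row is the localization sequence of Lemma~\ref{lem:K-pi-infty-KH}, and reducing to weak cartesianness of that square---is reasonable in outline, and is close in spirit to how the paper later deduces Theorem~\ref{thm:fund-fib-model} \emph{from} Theorem~\ref{thm.weakfibresequence}. But the step where you identify the fibre of $K^{\cont}(A_{0}) \to K^{\an}(A)$ has a genuine gap. You reduce to a statement for fixed $p$ and $j$ whose ``algebraic heart'' you state as an equivalence $K(R \on (\pi)) \simeq \prolim{n} K(R/\pi^{n} \on (\pi))$ for $R$ noetherian of finite Krull dimension. This is false, and it does not follow from pro-cdh descent, which concerns abstract blow-up squares rather than the comparison of a ring with its tower of quotients. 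Take $R=\Z_{p}$, $\pi=p$: by d\'evissage $K(\Z_{p}\on (p)) \simeq K(\mathbf{F}_{p})$, whereas $K(\Z/p^{n}\on (p)) = K(\Z/p^{n})$ since $p$ is nilpotent there, and $\prolim{n}K(\Z/p^{n})$ is not weakly equivalent to $K(\mathbf{F}_{p})$ (already on $K_{1}$ the pro-group of units $(\Z/p^{n})^{\times}$ is not pro-isomorphic to the constant $\mathbf{F}_{p}^{\times}$). The true comparison between the $\Delta_{\pi^{j}}$-direction and the $\pi^{n}$-adic direction is emphatically \emph{not} levelwise in $p$ and $j$: it only emerges after geometric realization over $p$ and passage to the pro-limit over $j$, and it is precisely the content of the connective comparison $k^{\an}(A_{0}) \simeq k^{\cont}(A_{0})$ of \cite[Thm.~6.9]{KSTI}, a substantial theorem which your argument would need but replaces by a false shortcut. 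A secondary issue is that you never engage with how $K^{\an}(A)$ is actually defined (via the analytic Bass construction applied to $\tilde k^{\an}(A;A_{0})$), so the map $K^{\cont}(A_{0})\to K^{\an}(A)$ and your square are not really constructed.

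For contrast, the paper's proof starts from the connective weak fibre sequence $k^{\an}(A_{0}\on(\pi)) \to k^{\cont}(A_{0}) \to \tilde k^{\an}(A;A_{0})$ supplied by part I and applies the analytic Bass construction $(-)^{B}$ to all three terms, identifying the results as $K^{\pi^{\infty}}(A_{0}\on(\pi))$, $K^{\cont}(A_{0})$, and $K^{\an}(A)$. The noetherian and finite-dimensionality hypotheses enter there: Thomason's excision identifies $K(A_{0}\langle\Delta^{p}_{\pi^{j}}\rangle\on(\pi))$ with $K(A_{0}[\Delta^{p}_{\pi^{j}}]\on(\pi))$, and Weibel's conjecture gives a uniform connectivity bound $-d-1$ making the Bass tower stabilize after $d$ steps and allowing finite limits to be commuted past the geometric realizations. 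So your instinct to invoke Weibel's conjecture and finite-dimensionality to control realizations is correct, but in the paper it powers the delooping, not a continuity statement. To repair your argument, take $k^{\an}(A_{0})\simeq k^{\cont}(A_{0})$ as a cited input rather than attempting to rederive it levelwise, and then supply the Bass-delooping step; at that point you will essentially have reproduced the paper's proof.
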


Before entering the proof, we recall the analytic Bass construction,  referring to \cite[Sec.~4.4]{KSTI} for details.
For a functor $E$ defined on the category of adic or Tate rings and with values in a cocomplete stable $\infty$-category (such as $\Sp$ or $\Pro(\Sp)$) we define the functor $\Lambda E$ by
\[
\Lambda E(A) = \fib( E(A\langle t \rangle) \sqcup_{E(A)} E (A\langle t^{-1} \rangle) \to E(A\langle t,t^{-1}\rangle) )
\]
If $E$ has the structure of a module over connective algebraic $K$-theory, then there is a natural map $\lambda\colon E(A) \to \Lambda E(A)$ given as the composite
\[
\lambda\colon E(A) \xrightarrow{\mycup t} \Omega E(A \langle t,t^{-1}\rangle) \to \Lambda E(A).
\]
If $\lambda$ is an equivalence, then $E$ satisfies an analytic version of the Bass fundamental theorem, see \cite[Prop.~4.13]{KSTI}.
The analytic Bass construction $E^{B}$ of $E$ is then given by
\[
E^{B}(A) = \colim ( E(A) \xrightarrow{\lambda}\Lambda E(A) \xrightarrow{\Lambda(\lambda)} \Lambda^{2}E(A) \to \dots).
\]

\begin{ex}
	\label{ex:k-cont-Bass}
Let $k^{\cont}$ be the functor from adic rings to pro-spectra that assigns to $A_{0}$ the
connective continuous $K$-theory \[ k^{\cont}(A_{0}) = \prolim{n} k(A_{0}/(\pi^{n})).\] As
a rule, we write $k$ for the connective algebraic $K$-theory $\tau_{\geq 0}K$. Then $k^{\cont, B}$ is canonically equivalent to non-connective continuous $K$-theory $K^{\cont}$. 
Indeed, using the isomorphisms $A_{0}\langle t \rangle / (\pi^{n}) \cong A_{0}/(\pi^{n})[t]$ and similarly for $t$ and $t^{-1}$, the classical Bass fundamental theorem \cite[Thm.~6.6]{thomason} implies that $\lambda\colon K^{\cont} \to \Lambda K^{\cont}$ is an equivalence, hence $K^{\cont} \simeq K^{\cont, B}$. Moreover, by nilinvariance of negative $K$-groups, the fibre of the canonical map $k^{\cont} \to K^{\cont}$ has values in constant, coconnective pro-spectra. Hence this fibre vanishes upon applying the analytic Bass construction \cite[Lem.~4.15]{KSTI}.
\end{ex}

\begin{proof}[Proof of Theorem~\ref{thm.weakfibresequence}]
As in \cite[Sec.~3.1]{KSTI} we denote by  $A_{0}\langle \Delta^{p}_{\pi^{j}}\rangle$  the $\pi$-adic completion of the ring $A_{0}[\Delta^{p}_{\pi^{j}}]$.
In \cite[Def.~6.5]{KSTI} we defined the pro-spectrum
\[
k^{\an}(A_{0}) = \prolim{j} k(A_{0}\langle \Delta_{\pi^{j}}\rangle)
\]
where $ k(A_{0}\langle \Delta_{\pi^{j}}\rangle)$ is the geometric realization of the simplicial spectrum $[p] \mapsto  k(A_{0}\langle \Delta^{p}_{\pi^{j}}\rangle)$. Similarly, we set\footnote{in \cite{KSTI}, this pro-spectrum is denoted $\bar k^{\an}(A_{0})$.}
\[
k^{\an}(A_{0} \on (\pi) ) = \prolim{j} k(A_{0}\langle \Delta_{\pi^{j}}\rangle \on (\pi) ),
\]
where $k(- \on (\pi))$ is the connective covering of $K(-\on (\pi))$.
There is a natural map
$k^{\an}(A_{0} \on (\pi) ) \to k^{\an}(A_{0} )$
and the pro-spectrum $\tilde k^{\an}(A;A_{0})$ is defined to fit in the cofibre sequence
\[
k^{\an}(A_{0} \on (\pi) ) \to k^{\an}(A_{0} ) \to \tilde k^{\an}(A;A_{0}).
\] 
By \cite[Thm.~6.9]{KSTI}, we have a natural weak equivalence $k^{\an}(A_{0}) \simeq k^{\cont}(A_{0})$, and thus we have a natural weak fibre sequence
\begin{equation*}
	\label{seq:connective}
k^{\an}(A_{0} \on (\pi) ) \to k^{\cont}(A_{0} ) \to \tilde k^{\an}(A;A_{0}).
\end{equation*}
We claim that applying the analytic Bass construction $(-)^{B}$ yields the asserted weak fibre sequence. 
Note that $(-)^{B}$ preserves weak equivalences and weak fibre sequences of functors as its construction only involves colimits.

Firstly, by \cite[Lem.~6.18(i)]{KSTI} there is a natural weak equivalence $(\tilde k^{\an})^{B}(A;A_{0}) \simeq K^{\an}(A)$. Secondly, by Example~\ref{ex:k-cont-Bass} there is an equivalence $k^{\cont, B}(A_{0}) \cong K^{\cont}(A_{0})$.
It thus remains to prove that 
the analytic Bass construction of $k^{\an}(- \on (\pi))$ evaluated on $A_{0}$ is weakly equivalent to $K^{\pi^{\infty}}(A_{0} \on (\pi))$.

Note that by Thomason's excision \cite[Prop.~3.19]{thomason} we have equivalences 
\begin{align*}
K(A_{0}\langle \Delta^{p}_{\pi^{j}} \rangle \on (\pi) ) &\simeq K(A_{0}[ \Delta^{p}_{\pi^{j}} ] \on (\pi) ) ,\\
K(A_{0}\langle \Delta^{p}_{\pi^{j}} \rangle\langle t \rangle \on (\pi) ) &\simeq K(A_{0}[ \Delta^{p}_{\pi^{j}} ][t] \on (\pi) )
\end{align*}
and similarly for $t^{-1}$ and $t,t^{-1}$. By Weibel's $K$-dimension conjecture, proven in \cite{KST-Weibel}, all these spectra are $(-d-1)$-connective for $d=\dim(A_{0})$. The classical Bass fundamental theorem and excision thus imply that 
\[
\Lambda^{n}k(A_{0}\langle \Delta^{p}_{\pi^{j}} \rangle \on (\pi) ) \simeq K(A_{0}\langle \Delta^{p}_{\pi^{j}} \rangle \on (\pi) ) 
\simeq K(A_{0}[ \Delta^{p}_{\pi^{j}} ] \on (\pi) ) 
\]
for all $p$ and $j$ and every $n > d$.   As finite limits and finite colimits in $\Pro(\Sp)$
  are computed level-wise and as  geometric realizations in $\Sp$ commute with finite limits and colimits this shows that
\[
    \Lambda^n k^{\an}(A_0 \on (\pi) ) \simeq
    \Lambda^n K^{\pi^{\infty}}( A_{0} \on (\pi)) \simeq K^{\pi^{\infty}}( A_{0} \on (\pi))
\]
for  $n > d$.
So the diagram in the definition of the analytic Bass construction for $E=k^{\an}(- \on
(\pi) )$, evaluated on $A_{0}$, is eventually constant with value $K^{\pi^{\infty}}( A_{0}
\on (\pi))$. This concludes the proof.
\end{proof}

We record the following applications:
\begin{cor}[{Bass fundamental theorem for analytic $K$-theory}]    
	\label{cor:BassFT}
	Let $A$ be a complete Tate ring which admits a finite dimensional, noetherian ring of definition. Then the map 
	\(
	\lambda\colon K^{\an}(A) \to \Lambda K^{\an}(A)
	\)
	is a weak equivalence of pro-spectra, and hence the Bass fundamental theorem holds. That is, for every integer $i$, we have an exact sequence
	\[
	0 \to K_{i}^{\an}(A) \to K_{i}^{\an}(A\langle t \rangle) \oplus K_{i}^{\an}(A\langle t^{-1} \rangle)
		\to K_{i}^{\an}(A\langle t, t^{-1} \rangle) \to K_{i-1}^{\an}(A) \to 0
	\]
	and the right-hand map has a splitting given by multiplication with the class of $t \in K_{1}(A\langle t, t^{-1} \rangle)$.
\end{cor}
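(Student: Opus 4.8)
The plan is to derive the statement from the fundamental weak fibre sequence of Theorem~\ref{thm.weakfibresequence} by applying the Bass construction. Replacing a given noetherian, finite-dimensional ring of definition by its $\pi$-adic completion, we may assume that $A_{0}$ is a complete $\pi$-adic ring with $A_{0}[\tfrac{1}{\pi}] = A$; it is then still noetherian and of finite Krull dimension. Theorem~\ref{thm.weakfibresequence} thus provides a weak fibre sequence of pro-spectra
\[
K^{\pi^{\infty}}(A_{0} \on (\pi)) \to K^{\cont}(A_{0}) \to K^{\an}(A).
\]
All three functors are modules over connective $K$-theory and the maps are module maps, so the natural transformation $\lambda$ is defined on each term and is compatible with the sequence. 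Since $E \mapsto \Lambda E$ is an exact functor (its construction involves only finite limits and colimits), applying $\Lambda$ yields again a weak fibre sequence, and $\lambda$ gives a map from the original weak fibre sequence to it. Taking fibres of $\lambda$ term by term thus produces a weak fibre sequence, so it suffices to show that $\lambda$ is a weak equivalence on $K^{\pi^{\infty}}(A_{0} \on (\pi))$ and on $K^{\cont}(A_{0})$.

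For $K^{\cont}(A_{0})$ this is precisely Example~\ref{ex:k-cont-Bass}. For $K^{\pi^{\infty}}(A_{0} \on (\pi))$ I would rerun the argument from the proof of Theorem~\ref{thm.weakfibresequence}: by Thomason's excision, the functor $K^{\pi^{\infty}}(- \on (\pi))$ evaluated on $A_{0}\langle t \rangle$, $A_{0}\langle t^{-1}\rangle$ and $A_{0}\langle t, t^{-1}\rangle$ agrees levelwise (in $p$ and $j$) with $K^{\pi^{\infty}}$ of the corresponding polynomial extensions, so that $\Lambda K^{\pi^{\infty}}(A_{0} \on (\pi))$ is computed levelwise by the usual Bass construction (formed with polynomial rings) applied to the nonconnective $K$-theory with support $K(A_{0}[\Delta^{p}_{\pi^{j}}] \on (\pi))$. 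The classical Bass fundamental theorem for nonconnective $K$-theory, together with the exactness of the Bass construction to accommodate the support, makes $\lambda$ a weak equivalence levelwise; and since finite limits in $\Pro(\Sp)$ are computed levelwise and geometric realizations in $\Sp$ commute with finite limits and colimits, $\Lambda$ commutes with the pro-limit over $j$ and the geometric realization over $p$ defining $K^{\pi^{\infty}}(- \on (\pi))$. Equivalently, fixing an integer $n > \dim A_{0}$, one identifies this instance of $\lambda$ with the transition map $\Lambda^{n} k^{\an}(A_{0} \on (\pi)) \to \Lambda^{n+1} k^{\an}(A_{0} \on (\pi))$, which was shown to be a weak equivalence in the proof of Theorem~\ref{thm.weakfibresequence}.

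It follows that $\lambda\colon K^{\an}(A) \to \Lambda K^{\an}(A)$ is a weak equivalence. The asserted four-term exact sequence, together with the splitting given by multiplication with the class of $t \in K_{1}(A\langle t, t^{-1}\rangle)$, is then a formal consequence via \cite[Prop.~4.13]{KSTI}. The only step that goes beyond bookkeeping is the verification that $\lambda$ is an equivalence on $K^{\pi^{\infty}}(A_{0} \on (\pi))$; as indicated, this is already essentially contained in the proof of Theorem~\ref{thm.weakfibresequence}, and ultimately rests on $K^{\pi^{\infty}}(- \on (\pi))$ being built from nonconnective $K$-theory, for which the classical Bass fundamental theorem holds without regularity hypotheses.
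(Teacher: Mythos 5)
Your proposal is correct and follows essentially the same route as the paper: apply the fundamental fibre sequence of Theorem~\ref{thm.weakfibresequence}, verify that $\lambda$ is an equivalence on $K^{\cont}(A_{0})$ via Example~\ref{ex:k-cont-Bass} and on $K^{\pi^{\infty}}(A_{0} \on (\pi))$ via Thomason's excision plus the classical Bass fundamental theorem, and conclude via \cite[Prop.~4.13]{KSTI}. The paper's proof is terser but identical in substance; your extra detail on exactness of $\Lambda$ and the levelwise identification is a faithful expansion of what the paper leaves implicit.
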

\begin{proof}
Let $A_{0}$ be a finite dimensional noetherian ring of definition. 
As in Example~\ref{ex:k-cont-Bass}, the map $\lambda\colon K^{\cont}(A_{0}) \to \Lambda K^{\cont}(A_{0})$ is an equivalence. 
Using again Thomason's excision and the classical Bass fundamental theorem we see that also $\lambda\colon K^{\pi^{\infty}}(A_{0} \on (\pi) ) \to \Lambda K^{\pi^{\infty}}(A_{0} \on (\pi) )$ is an equivalence. By Theorem~\ref{thm.weakfibresequence} this implies that $\lambda\colon K^{\an}(A) \to \Lambda K^{\an}(A)$ is a weak equivalence. The rest now follows from \cite[Prop.~4.13]{KSTI}.
\end{proof}

\begin{cor}[Pro-homotopy invariance]
	\label{cor:pro-homotopy-invariance}
	Let $A$ be a complete Tate ring which admits a finite dimensional, noetherian ring of definition. Then the map 
	\[
	K^{\an}(A) \to \prolim{t\mapsto \pi t} K^{\an}(A\langle t \rangle)
	\]
	is a weak equivalence.
\end{cor}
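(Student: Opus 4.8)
The plan is to deduce the pro-homotopy invariance of $K^{\an}$ from the fundamental weak fibre sequence of Theorem~\ref{thm.weakfibresequence} together with the already-established pro-homotopy invariance of the two outer terms. Fix a finite dimensional noetherian ring of definition $A_{0}$ of $A$, so that $A = A_{0}[1/\pi]$ and $A\langle t\rangle = A_{0}\langle t\rangle[1/\pi]$, with $A_{0}\langle t\rangle$ again a finite dimensional noetherian ring of definition (finiteness of Krull dimension being preserved by adjoining one variable over a noetherian ring). Applying Theorem~\ref{thm.weakfibresequence} to both $A_{0}$ and $A_{0}\langle t\rangle$ gives two weak fibre sequences of pro-spectra, and the map $t\mapsto \pi t$ on the simplicial/adic variable is compatible with the maps $A_{0}\to A_{0}\langle t\rangle$, so after forming $\prolim{t\mapsto \pi t}$ we obtain a map of weak fibre sequences
\[
\begin{tikzcd}
K^{\pi^{\infty}}(A_{0}\on(\pi)) \ar[r]\ar[d] & K^{\cont}(A_{0}) \ar[r]\ar[d] & K^{\an}(A) \ar[d]\\
\prolim{t\mapsto\pi t} K^{\pi^{\infty}}(A_{0}\langle t\rangle \on(\pi)) \ar[r] & \prolim{t\mapsto\pi t} K^{\cont}(A_{0}\langle t\rangle) \ar[r] & \prolim{t\mapsto\pi t} K^{\an}(A\langle t\rangle).
\end{tikzcd}
\]

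Next I would argue that the left and middle vertical maps are weak equivalences. For the left-hand term this is Lemma~\ref{lem:pro-HI-K-pi-infty} applied to $X = \Spec A_{0}$ (the version with supports on $(\pi)$), noting $A_{0}\langle t\rangle \on(\pi) \simeq A_{0}[t]\on(\pi)$ and $A_{0}\langle \Delta^{p}_{\pi^{j}}\rangle\langle t\rangle\on(\pi)\simeq A_{0}[\Delta^{p}_{\pi^{j}}][t]\on(\pi)$ by Thomason's excision, so that $K^{\pi^{\infty}}(A_{0}\langle t\rangle\on(\pi))$ agrees with the scheme-theoretic $K^{\pi^{\infty}}(\Spec A_{0}[t]\on(\pi))$ that appears in that lemma. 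For the middle term, $K^{\cont}(A_{0}\langle t\rangle) = \prolim{n} K(A_{0}\langle t\rangle/(\pi^{n})) = \prolim{n} K(A_{0}/(\pi^{n})[t])$, so $\prolim{t\mapsto\pi t} K^{\cont}(A_{0}\langle t\rangle) = \prolim{n,\,t\mapsto\pi t} K((A_{0}/(\pi^{n}))[t])$; since each $A_{0}/(\pi^{n})$ is noetherian of finite Krull dimension, classical pro-homotopy invariance of algebraic $K$-theory (as in \cite[Prop.~6.3]{KSTI}, or the scheme-level statement underlying Lemma~\ref{lem:pro-HI-K-pi-infty}) shows $K(A_{0}/(\pi^{n})) \xrightarrow{\sim} \prolim{t\mapsto\pi t} K((A_{0}/(\pi^{n}))[t])$ level-wise in $n$, whence the middle vertical map is a weak equivalence after a cofinality/reindexing argument for the double pro-system.

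Finally, since $\prolim{t\mapsto\pi t}(-)$ is a filtered limit and thus preserves weak fibre sequences of pro-spectra (it only involves limits along the pro-index, computed level-wise, and weak fibre sequences are detected on truncations), the bottom row is again a weak fibre sequence. The five lemma for the long exact sequences of homotopy pro-groups attached to the two weak fibre sequences—or equivalently passing to fibres in $\Pro(\Sp)$ and using that the induced map on fibres is an equivalence once two of three maps are—then forces the right vertical map $K^{\an}(A)\to \prolim{t\mapsto\pi t} K^{\an}(A\langle t\rangle)$ to be a weak equivalence. The main technical point, and the step I would write out most carefully, is the bookkeeping for the middle term: one must commute the pro-limit over $n$ (defining $K^{\cont}$) with the pro-limit over the multiplication-by-$\pi$ maps on $t$, and check that the resulting double pro-spectrum is weakly equivalent to the single pro-system $\prolim{n} K(A_{0}/(\pi^{n}))$; this is a cofinality argument in $\Pro(\Sp)$ of the same flavour as those in \cite[Sec.~2]{KSTI}, but it is where a careless proof could go wrong.
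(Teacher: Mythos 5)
Your proof is correct and follows essentially the same route as the paper: the paper's own proof is a two-line appeal to the fundamental fibre sequence of Theorem~\ref{thm.weakfibresequence} together with pro-homotopy invariance of the outer terms ($K^{\cont}$ via \cite[Lem.~5.13]{KSTI}, and $K^{\pi^{\infty}}(-\on(\pi))$ via Lemmas~\ref{lem:pro-HI-K-pi-infty} and \ref{lem:K-pi-infty-KH}), followed by two-out-of-three. Your more detailed write-up, including the identification $A_{0}\langle t\rangle/(\pi^{n})\cong A_{0}/(\pi^{n})[t]$ and the observation that the middle-term collapse is really a consequence of $\pi$ being nilpotent in $A_{0}/(\pi^{n})$, is exactly the content behind those citations.
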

\begin{proof}
Both other terms in the fundamental fibre sequence of Theorem~\ref{thm.weakfibresequence} satisfy this type of pro-homotopy invariance: See \cite[Lem.~5.13]{KSTI} for $K^{\cont}$ and Lemmas~\ref{lem:pro-HI-K-pi-infty}, \ref{lem:K-pi-infty-KH} for $K^{\pi^{\infty}}(- \on (\pi))$.
\end{proof}

%%%%%%%%%%%%%%%%%
We want to compare analytic $K$-theory of Tate rings to the ``$\A^{1}$-locali\-zation'' of Morrow's continuous $K$-theory \cite{morrow-hist} (see \cite[Def.~5.3]{KSTI}), similarly as Weibel's homotopy $K$-theory is the $\A^{1}$-localization of algebraic $K$-theory. Recall that the $\infty$-category $\Pro(\Sp)$ is cocomplete, so it admits geometric realizations of simplicial objects. Note also that general colimits in $\Pro(\Sp)$ cannot be computed level-wise. However, up to weak equivalence this is the case for geometric realizations of bounded below simplicial objects, as we prove in the following lemma.
Let $X_{\bullet} = \lprolim{\lambda} X_{\bullet}^{(\lambda)}$ be a pro-object of simplicial spectra, i.e.~an object of $\Pro(\Fun(\Delta^{\op}, \Sp))$.
We can view $X_{\bullet}$ as a simplicial object in $\Pro(\Sp)$ and form its geometric realization $|X_{\bullet}|$ there, or we can form the level-wise  geometric realizations $|X_{\bullet}^{(\lambda)}|$  in spectra and consider the pro-spectrum $\lprolim{\lambda} |X_{\bullet}^{(\lambda)}|$.

\begin{lem}\label{lem:geom-realization-pro-spectra}
In the above situation, assume that $X_{\bullet}$ is uniformly bounded below, i.e.~that there exists an integer $N$ such that $X_{p}^{(\lambda)} \in \Sp_{\geq N}$ for all $p\in \Delta$ and all $\lambda \in \Lambda$. Then the evident map
\[
|X_{\bullet}| \to \prolim{\lambda} |X_{\bullet}^{(\lambda)}|
\]
is a weak equivalence.
\end{lem}
\begin{proof}
It is easy to check that the $\infty$-category $\Pro(\Sp)$ admits a $t$-structure whose connective part $\Pro(\Sp)_{\geq 0}$ is given by the essential image of the evident functor $\Pro(\Sp_{\geq 0}) \to \Pro(\Sp)$ and whose coconnective part $\Pro(\Sp)_{\leq 0}$ is the essential image of the functor $\Pro(\Sp_{\leq 0}) \to \Pro(\Sp)$. Its heart is the category of pro-abelian groups $\Pro(\Ab)$. The homotopy group functors of the $t$-structure $\pi_{n} \colon \Pro(\Sp) \to \Pro(\Ab)$ are just the level-wise homotopy group functors $\Pro(\pi_{n})$, which we previously also denoted by $\pi_{n}$.

By shifting $X_{\bullet}$, we may assume that $N=0$ so that $X_{\bullet}$ determines a simplicial object of $\Pro(\Sp)_{\geq 0}$. In this situation  there is a convergent first quadrant spectral sequence of pro-abelian groups $E^{1}_{p,q} \Rightarrow \pi_{p+q}(|X_{\bullet}|)$ where $E^{1}_{*,q}$ is the normalized chain complex associated with the simplicial pro-abelian group $\pi_{q}(X_{\bullet})$, see \cite[Prop.~1.2.4.5]{HA}. On the other hand, for each $\lambda$ we have a convergent spectral sequence of abelian groups $E^{1,(\lambda)}_{p,q} \Rightarrow \pi_{p+q}(|X^{(\lambda)}_{\bullet}|)$ where again the $E^{1}$-terms are given by the normalized chain complexes associated with the simplicial abelian group $\pi_{q}(X^{(\lambda)}_{\bullet})$. As finite limits and finite colimits in $\Pro(\Ab)$ are computed level-wise, the pro-system of these spectral sequences yields a convergent spectral sequence $\lprolim{\lambda} E^{1,(\lambda)}_{p,q} \Rightarrow \lprolim{\lambda} \pi_{p+q}(|X_{\bullet}^{(\lambda)}|)$. By the same reason, and as $\pi_{q}(X_{p}) = \lprolim{\lambda} \pi_{q}(X_{p}^{(\lambda)})$, the canonical map of spectral sequences $E^{*}_{*,*} \to \lprolim{\lambda} E^{*,(\lambda)}_{*,*}$ is an isomorphism on $E^{1}$-pages. Hence $\pi_{n}(|X_{\bullet}|) \to \lprolim{\lambda} \pi_{n}(|X_{\bullet}^{(\lambda)}|)$ is an isomorphism of pro-abelian groups, as was to be shown.
\end{proof}

Now let $A$ be a Tate ring with ring of definition $A_{0}$ and pseudo-uniformizer $\pi$. Recall that the continuous $K$-theory of $A$ with respect to the ring of definition $A_{0}$ is given by
\[
K^{\cont}(A; A_{0}) = \cofib(K(A_{0} \text{ on } (\pi)) \to K^{\cont}(A_{0})),
\]
and this pro-spectrum is independent of the choice of $A_{0}$ up to weak equivalence \cite[Def.~5.3, Prop.~5.4]{KSTI}. As this definition is clearly functorial in the pair $(A_{0},\pi)$, we get a well defined simplicial pro-spectrum $K^{\cont}(A\langle \Delta_{\pi^{j}}^{\bullet} \rangle; A_{0}\langle \Delta_{\pi^{j}}^{\bullet} \rangle)$ (which can also be realized as a pro-simplicial spectrum) and we define $K^{\cont}(A\langle \Delta_{\pi^{j}} \rangle; A_{0}\langle \Delta_{\pi^{j}} \rangle)$ to be its geometric realization. As weak equivalences are preserved under colimits, this definition does not depend on the choice of $A_{0}$ up to weak equivalence. Taking the colimit over all rings of definition $A_{0}$ gives the well defined pro-spectrum $K^{\cont}(A\langle \Delta_{\pi^{j}} \rangle)$. The ``$\A^{1}$-localization'' of continuous $K$-theory at the Tate ring $A$ is then the pro-spectrum 
\[
\prolim{j} K^{\cont}(A\langle \Delta_{\pi^{j}} \rangle).
\]

\begin{cor} [{$\A^{1}$-localization}]
\label{cor:A1-loc}
Suppose the Tate ring $A$ admits a noetherian, finite dimensional ring of definition. Then there is a canonical weak equivalence of pro-spectra
\[
\prolim{j} K^{\cont}(A\langle \Delta_{\pi^{j}} \rangle) \simeq K^{\an}(A).
\]
\end{cor}
\begin{proof}
Let $A_{0} \subseteq A$ be a noetherian, finite dimensional ring of definition, and let $\pi \in A_{0}$ be a pseudo-uniformizer.
By the discussion above, we have a weak fibre 
sequence of pro-simplicial spectra
\begin{equation}\label{eq:eee}
K(A_{0}\langle\Delta_{\pi^{j}}^{\bullet}\rangle \text{ on } (\pi))\to 
K^{\cont}(A_{0}\langle\Delta_{\pi^{j}}^{\bullet}\rangle) \to
K^{\cont}(A\langle\Delta_{\pi^{j}}^{\bullet}\rangle; A_{0}\langle\Delta_{\pi^{j}}^{\bullet}\rangle)
\end{equation}
As in the proof of Theorem~\ref{thm.weakfibresequence}, the the first two and hence the third pro-simplicial spectrum in this fibre sequence are uniformly bounded below by $-\dim(A_{0})$.
Passing to geometric realizations and then to the limit $\lprolim{j}$, the first term gets identified with $K^{\pi^{\infty}}(A_{0} \text{ on } (\pi))$. For the second term we compute
\begin{align*}
\prolim{j} |K^{\cont}(A_{0}\langle \Delta^{\bullet}_{\pi^{j}} \rangle)| &= \prolim{j} | \prolim{n} K(A_{0}\langle \Delta^{\bullet}_{\pi^{j}} \rangle/(\pi^{n}) | \\
&\simeq \prolim{j}\prolim{n} |K(A_{0}\langle \Delta^{\bullet}_{\pi^{j}} \rangle/(\pi^{n}) )| &&\text{(Lemma~\ref{lem:geom-realization-pro-spectra})} \\
&\simeq \prolim{n}\prolim{j} |K(A_{0}\langle \Delta^{\bullet}_{\pi^{j}} \rangle/(\pi^{n})) |   \\
&\simeq \prolim{n} K(A_{0}/(\pi^{n})) &&\text{(*)}\\
&= K^{\cont}(A_{0}).
\end{align*}
Here the equivalence (*) follows from the isomorphism of pro-simplicial rings $\lprolim{j} A_{0}\langle \Delta^{\bullet}_{\pi^{j}} \rangle/(\pi^{n}) \cong A_{0}/(\pi^{n})$, where we view the right-hand term as a constant pro-simplicial ring. Thus the weak fibre sequence \eqref{eq:eee} gives rise to the weak fibre sequence
\[
K^{\pi^{\infty}}(A_{0} \text{ on } (\pi)) \to K^{\cont}(A_{0}) \to \prolim{j} K^{\cont}(A\langle \Delta_{\pi^{j}}\rangle).
\]
Comparing with the weak fibre sequence of Theorem~\ref{thm.weakfibresequence}, we obtain the requested weak equivalence.
\end{proof}

%%%%%%%%%%%%%%%%%

There is a more general version of the fundamental fibre sequence involving a Raynaud model $X$ of the Tate ring $A =A_{0}[\frac{1}{\pi}]$. We will use this to prove descent for analytic $K$-theory in Section~\ref{sec:descent}. Let $p\colon X \to \Spec(A_{0})$ be an admissible morphism, i.e.~a proper morphism of schemes which is an isomorphism over $\Spec(A)$. Let
\[
K^{\cont}(X) = \prolim{j} K(X \otimes_{A_{0}} A_{0}/(\pi^{j}) ).
\]

\begin{thm}
	\label{thm:fund-fib-model}
Let $A_{0}$ be a complete $\pi$-adic ring which is noetherian and of finite Krull dimension, and let $A = A_{0}[\frac{1}{\pi}]$. Let $p\colon X \to \Spec(A_{0})$ be an admissible morphism. Then there is a weak fibre sequence of pro-spectra
\[
K^{\pi^{\infty}} (X \on (\pi)) \to K^{\cont}(X) \to K^{\an}(A).
\]
\end{thm}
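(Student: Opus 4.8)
The plan is to deduce the statement from the affine case, Theorem~\ref{thm.weakfibresequence}, by a descent argument along the admissible morphism $p$. First I would repeat the construction underlying Theorem~\ref{thm.weakfibresequence} with the model $X$ in place of $\Spec(A_{0})$: define pro-spectra $k^{\an}(X)$ and $k^{\an}(X\on(\pi))$ by the same pro-simplicial formulas, now using the schemes $X\otimes_{A_{0}}A_{0}\langle\Delta^{m}_{\pi^{j}}\rangle$ and algebraic $K$-theory of schemes with supports, and set $\tilde k^{\an}(A;X) = \cofib(k^{\an}(X\on(\pi))\to k^{\an}(X))$. One needs the model analogue of \cite[Thm.~6.9]{KSTI}, namely $k^{\an}(X)\simeq k^{\cont}(X)$, which should follow by the same argument. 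Applying the analytic Bass construction and running the proof of Theorem~\ref{thm.weakfibresequence} verbatim --- Thomason's excision together with Weibel's $K$-dimension conjecture \cite{KST-Weibel}, which applies because $X$ is noetherian of finite Krull dimension --- then gives that $(-)^{B}$ takes $k^{\an}(X\on(\pi))$ to $K^{\pi^{\infty}}(X\on(\pi))$ and $k^{\an}(X)$ to $K^{\cont}(X)$, hence produces a weak fibre sequence $K^{\pi^{\infty}}(X\on(\pi))\to K^{\cont}(X)\to (\tilde k^{\an})^{B}(A;X)$. So everything comes down to identifying $(\tilde k^{\an})^{B}(A;X)$ with $K^{\an}(A)$.

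Pullback along $p$ gives a map from the affine cofibre sequence $k^{\an}(A_{0}\on(\pi))\to k^{\an}(A_{0})\to\tilde k^{\an}(A;A_{0})$ to the one for $X$; because $p$ is an isomorphism over $\Spec(A)$, after $(-)^{B}$ the induced map on third terms is a map $K^{\an}(A)=(\tilde k^{\an})^{B}(A;A_{0})\to (\tilde k^{\an})^{B}(A;X)$, and a standard diagram chase shows this is a weak equivalence if and only if the square with top row $K^{\pi^{\infty}}(A_{0}\on(\pi))\to K^{\cont}(A_{0})$, bottom row $K^{\pi^{\infty}}(X\on(\pi))\to K^{\cont}(X)$, and vertical maps $p^{*}$ is weakly cocartesian. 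Invoking Lemma~\ref{lem:K-pi-infty-KH} together with the equality $X[\tfrac{1}{\pi}]=\Spec(A)$, under which $p$ restricts to the identity, both localization sequences for $K^{\pi^{\infty}}$ acquire the same quotient $KH(A)$, so the cocartesianness is in turn equivalent to
\[
\cofib\bigl(p^{*}\colon K^{\pi^{\infty}}(A_{0})\to K^{\pi^{\infty}}(X)\bigr)\ \xrightarrow{\ \sim\ }\ \cofib\bigl(p^{*}\colon K^{\cont}(A_{0})\to K^{\cont}(X)\bigr).
\]

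For the right side, $\cofib(p^{*}\colon K^{\cont}(A_{0})\to K^{\cont}(X))=\prolim{n}\cofib\bigl(K(A_{0}/(\pi^{n}))\to K(X\otimes_{A_{0}}A_{0}/(\pi^{n}))\bigr)$, and since $p$ is proper and an isomorphism over $\Spec(A)$ it defines an abstract blow-up square of $\Spec(A_{0})$ along $V(\pi)$; pro-cdh descent for algebraic $K$-theory \cite{KST-Weibel} then identifies this pro-spectrum with the honest spectrum $\cofib\bigl(K(A_{0}/(\pi))\to K(X\otimes_{A_{0}}A_{0}/(\pi))\bigr)$, the relative $K$-theory of the special fibre. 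For the left side, $\cofib(p^{*}\colon K^{\pi^{\infty}}(A_{0})\to K^{\pi^{\infty}}(X))=\prolim{j}\bigl|\,[m]\mapsto\cofib\bigl(K(A_{0}[\Delta^{m}_{\pi^{j}}])\to K(X\otimes_{A_{0}}A_{0}[\Delta^{m}_{\pi^{j}}])\bigr)\,\bigr|$, and each $X\otimes_{A_{0}}A_{0}[\Delta^{m}_{\pi^{j}}]\to\Spec A_{0}[\Delta^{m}_{\pi^{j}}]$ is again an abstract blow-up with respect to $V(\pi)$, so applying pro-cdh descent levelwise together with Thomason's excision one reduces, once more, to the special fibre. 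The main obstacle is precisely this last matching: one must check that forming $\prolim{j}$ of the geometric realization of the ``honest'' relative $K$-theory agrees with forming it of the ``pro-infinitesimal'' one. The clean way to organise this is to observe that the relative functor $Y\mapsto\fib\bigl(K(Y)\to K(X\times_{\Spec A_{0}}Y)\bigr)$ is a localizing invariant that vanishes after inverting $\pi$, so that both $K^{\cont}$ and $K^{\pi^{\infty}}(-\on(\pi))$ satisfy pro-descent for the abstract blow-up square defined by $p$ (the former directly by pro-cdh descent for $K$, the latter because $KH$ is cdh-local and by Lemma~\ref{lem:K-pi-infty-KH}); both cofibres are then computed by the same datum, namely the relative $K$-theory of $X\otimes_{A_{0}}A_{0}/(\pi)\to\Spec(A_{0}/(\pi))$. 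Alternatively, one may bypass the reduction and directly verify the model analogue of \cite[Lem.~6.18]{KSTI}, that $(\tilde k^{\an})^{B}(A;X)\simeq K^{\an}(A)$, by showing $\tilde k^{\an}(A;X)\simeq\tilde k^{\an}(A;A_{0})$, the difference of the support term and the total term over $X$ versus over $\Spec(A_{0})$ being controlled by the common generic fibre $A$.
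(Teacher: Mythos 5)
Your overall strategy coincides with the paper's in its main reduction: use the localization sequences of Lemma~\ref{lem:K-pi-infty-KH} together with the equivalence $KH(A)\simeq KH(X[\tfrac{1}{\pi}])$ to reduce everything to showing that the square with rows $K^{\pi^{\infty}}(-\on(\pi))\to K^{\cont}(-)$ over $A_{0}$ and over $X$, with vertical maps $p^{*}$, is weakly (co)cartesian, and then attack this square with pro-cdh descent. Your first paragraph, however, is both superfluous and a liability: once that square is weakly cartesian, the cofibre of the bottom row is identified with the cofibre of the top row, which is $K^{\an}(A)$ by Theorem~\ref{thm.weakfibresequence}, so there is no need to redo the connective machinery $k^{\an}(X)$, $\tilde k^{\an}(A;X)$ for the non-affine model, and in particular no need for the asserted-but-unproved model analogue of \cite[Thm.~6.9]{KSTI}.

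The genuine gap is in your invocation of pro-cdh descent. For the abstract blow-up square defined by $p$ with centre $V(\pi)$, \cite[Thm.~A]{KST-Weibel} identifies $\prolim{m}\cofib\bigl(K(A_{0}/(\pi^{m}))\to K(X/(\pi^{m}))\bigr)$ with the \emph{constant} pro-spectrum $\cofib\bigl(K(A_{0})\to K(X)\bigr)$, the relative $K$-theory of the total spaces, and \emph{not} with $\cofib\bigl(K(A_{0}/(\pi))\to K(X/(\pi))\bigr)$. The pro-system of relative $K$-theories of the infinitesimal thickenings is not pro-constant (relative $K$-theory is not nil-invariant; already relative $K_{0}$ and $K_{1}$ of thickenings of an exceptional divisor vary with $m$), so the ``common third object'' on which both of your proposed conclusions rest does not exist in the form you describe. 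The correct matching, as in the paper, applies pro-cdh descent levelwise to replace $K(A_{0}[\Delta^{m}_{\pi^{j}}])$ and $K(X[\Delta^{m}_{\pi^{j}}])$ by $\prolim{l}K(A_{0}/(\pi^{l})[\Delta^{m}_{\pi^{j}}])$ and $\prolim{l}K(X/(\pi^{l})[\Delta^{m}_{\pi^{j}}])$, then commutes the geometric realization over $\Delta^{\op}$ with $\prolim{l}$ --- this is precisely where the noetherian, finite-dimensional hypothesis enters, via the uniform $(-d)$-connectivity from Weibel's conjecture \cite[Thm.~B]{KST-Weibel} and the realization spectral sequence --- and finally uses that $\prolim{j}A_{0}/(\pi^{l})[\Delta_{\pi^{j}}]$ is a pro-constant pro-simplicial ring to land in $\cofib\bigl(K^{\cont}(A_{0})\to K^{\cont}(X)\bigr)$ directly. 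You correctly flag the realization-versus-pro-limit interchange as ``the main obstacle,'' but neither of your proposed resolutions addresses it: the first relies on the incorrect special-fibre identification, and the second (the localizing-invariant reformulation) again concludes that both cofibres are ``computed by the special fibre,'' which is not what pro-cdh descent provides.
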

\begin{proof}
Thanks to Theorem \ref{thm.weakfibresequence}, it suffices to show that in $\Pro(\Sp)$ there is a weakly cartesian square
\begin{equation}\label{eq;thm.KHancont-variant}\begin{split}
\xymatrix{
K^{\pi^{\infty}}(A_{0} \on (\pi)) \ar[r]\ar[d] & K^{\cont}(A_{0}) \ar[d]\\
K^{\pi^{\infty}}(X \on (\pi)) \ar[r] & K^{\cont}(X) .
}
\end{split}
\end{equation}
We have a commutative diagram 
\begin{equation}\label{diag:aaaa}\begin{split}
\xymatrix{
K^{\pi^{\infty}}(A_{0} \on (\pi)) \ar[r]\ar[d] & K^{\pi^{\infty}}(A_{0} ) \ar[r]\ar[d] &  KH(A)\ar[d]^{\cong}\\
K^{\pi^{\infty}}(X \on (\pi)) \ar[r] & K^{\pi^{\infty}}(X) \ar[r] &  KH(X [\tfrac{1}{\pi}])
}
\end{split}
\end{equation}
where the rows are fibre sequences in $\Pro(\Sp)$ provided by Lemma~\ref{lem:K-pi-infty-KH}. 
Since the right vertical map is an equivalence,  the left-hand square is cartesian. 
Write $X/(\pi^m)=X\otimes_{A_{0}} A_{0}/(\pi^m)$. We claim that the square
\begin{equation}\label{diag:KH-KBcont-fibre-square}\begin{split}
\xymatrix{
K^{\pi^{\infty}}(A_{0}) \ar[r]\ar[d] &  \prolim{j,m} K(A_{0}/(\pi^{m})[\Delta_{\pi^{j}}]) \ar[d]\\
K^{\pi^{\infty}}(X) \ar[r] &  \prolim{j,m} K(X/(\pi^{m})[\Delta_{\pi^{j}}]) \\
}
\end{split}
\end{equation}
in $\Pro(\Sp)$ is weakly cartesian. For this let $Q(p,j,m)$ be the square of spectra
\[
\xymatrix{
K(A_{0}[\Delta^{p}_{\pi^{j}}]) \ar[r]\ar[d] &   K(A_{0}/(\pi^{m})[\Delta^{p}_{\pi^{j}}]) \ar[d]\\
K(X[\Delta^{p}_{\pi^{j}}]) \ar[r] &  K(X/(\pi^{m})[\Delta^{p}_{\pi^{j}}]) \\
}
\]
and denote by $F(p,j,m)$ its total fibre. 
So the square \eqref{diag:KH-KBcont-fibre-square} is 
\[
\prolim{j,m} \colim_{p\in\Delta^{\op}} Q(p,j,m).
\]
By ``pro cdh descent'' \cite[Thm.~A]{KST-Weibel}, the square of pro-spectra $\lprolim{m} Q(p,j,m)$ is weakly cartesian for every $p$ and every $j$, i.e.~$\lprolim{m} F(p,j,m) \simeq 0$. Note also that by Weibel's $K$-dimension conjecture \cite[Thm.~B]{KST-Weibel} all spectra in $Q(p,j,m)$ are $(-d)$-connective for $d=\dim(A_{0})$, hence $F(p,j,m)$ is $(-d-2)$-connective. The spectral sequence computing the homotopy groups of the geometric realization of a simplicial spectrum now implies that the pro-spectrum
\[
\prolim{m} \colim_{p\in \Delta^{op}} F(p,j,m)
\]
is weakly contractible for every $j$. Taking $\lprolim{j}$ implies that \eqref{diag:KH-KBcont-fibre-square} is weakly cartesian.

Finally, consider the commutative square in $\Pro(\Sp)$:
\begin{equation}\label{diag:bbbb} \begin{split}
\xymatrix{
K(A_{0}/(\pi^m))\ar[d]\ar[r]^-{\simeq} & \prolim{j} K(A_{0}/(\pi^m)[\Delta_{\pi^j}]) \ar[d] \\
K(X/(\pi^m)) \ar[r]^-{\simeq} &\prolim{j} K(X/(\pi^m)[\Delta_{\pi^j}])\;. 
}
\end{split}
\end{equation}
The horizontal maps in the above diagram are equivalences by the fact that 
the map $A_{0}/(\pi^{m}) \to \lprolim{j} A_{0} / (\pi^{m})[\Delta_{\pi^j}]$ is an isomorphism of pro-simplicial rings and a similar fact with $A_{0}$ replaced by $X$.

Composing the left-hand square in \eqref{diag:aaaa}, \eqref{diag:KH-KBcont-fibre-square}, and the horizontal inverse of \eqref{diag:bbbb}, we obtain the desired weakly cartesian square \eqref{eq;thm.KHancont-variant}.
\end{proof}

\section{Excision for analytic $K$-theory}

The goal of this section is to establish excision for Milnor squares for analytic $K$-theory for Tate rings that admit  a finite dimensional, noetherian ring of definition.

Recall that a \emph{Milnor square} is a cartesian square of rings of the form
\begin{equation} \label{Milnor-square}
	\begin{tikzcd}
	A \ar[r] \ar[d,"\phi"] & \ol A \ar[d] \\
	B \ar[r] & \ol B
	\end{tikzcd}
\end{equation}
in which the horizontal maps are surjective.
By a \emph{Milnor square of complete Tate rings} we mean a Milnor square in which all rings are complete Tate rings and all homomorphisms are continuous.

\begin{lemma}
	\label{lem:milnor-square-ring-of-def}
Assume that \eqref{Milnor-square} is a Milnor square of complete Tate rings. Then there exist rings of definition $A_{0}$, $B_{0}$, $\ol A_{0}$, and $\ol B_{0}$ such that \eqref{Milnor-square} restricts to a Milnor square
\[
\begin{tikzcd}
A _{0}\ar[r] \ar[d,"\phi"] & \ol A_{0} \ar[d] \\
B_{0} \ar[r] & \ol B_{0}.
\end{tikzcd}
\]
If $A$ and $B$ admit finite dimensional noetherian rings of definition, one can choose these rings of definition to be noetherian and finite dimensional as well.
\end{lemma}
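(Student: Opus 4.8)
I would argue as follows, constructing all four rings of definition out of a single ring of definition of $A$ by transporting it along the two surjective horizontal maps of \eqref{Milnor-square} and then forming a fibre product at the corner. Write $\alpha\colon A\to\ol A$ and $\beta\colon B\to\ol B$ for the horizontal maps (which are surjective) and $\bar\phi\colon\ol A\to\ol B$ for the remaining vertical map, so that $A=B\times_{\ol B}\ol A$. Since a complete Tate ring is a complete, first countable (hence completely metrizable) topological group, the open mapping theorem applies: a continuous surjective homomorphism of complete Tate rings is open, and a continuous bijective one is a topological isomorphism. Hence $\alpha$ and $\beta$ carry rings of definition to rings of definition — openness of the image from openness of $\alpha$, $\beta$; boundedness since continuous images of bounded subrings are bounded; and a subring maps to a subring — and they carry noetherian, finite dimensional rings of definition to such, the image being a quotient ring.

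Concretely: fix a ring of definition $A_0$ of $A$, chosen noetherian and finite dimensional in the situation of the last sentence of the lemma. Put $\ol A_0:=\alpha(A_0)$, a ring of definition of $\ol A$. The image $\phi(A_0)$ is a bounded subring of $B$, and for any ring of definition $B_0^{\flat}$ of $B$ the additive span $\phi(A_0)\cdot B_0^{\flat}$ of products is again a ring of definition of $B$: it contains $B_0^{\flat}$, hence is open, and if $\phi(A_0)\subseteq\varpi^{-m}B_0^{\flat}$ for a pseudo-uniformizer $\varpi$ of $B$ then $\phi(A_0)\cdot B_0^{\flat}\subseteq\varpi^{-m}B_0^{\flat}$, hence it is bounded. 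Set $B_0:=\phi(A_0)\cdot B_0^{\flat}$, choosing $B_0^{\flat}$ noetherian and finite dimensional in the refined situation; then $B_0$ is a $B_0^{\flat}$-submodule of the finite free $B_0^{\flat}$-module $\varpi^{-m}B_0^{\flat}$, hence finite over $B_0^{\flat}$, hence itself noetherian and finite dimensional. Finally put $\ol B_0:=\beta(B_0)$ and
\[
A_0':=B_0\times_{\ol B_0}\ol A_0\ \subseteq\ B\times_{\ol B}\ol A=A.
\]

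It then remains to check three things. First, $A_0',B_0,\ol A_0,\ol B_0$ form a Milnor square: the square is cartesian by construction, $B_0\to\ol B_0$ is surjective by definition of $\ol B_0$, and $A_0'\to\ol A_0$ is surjective because for $y\in\ol A_0=\alpha(A_0)$ one has $\bar\phi(y)\in\bar\phi(\alpha(A_0))=\beta(\phi(A_0))\subseteq\beta(B_0)=\ol B_0$, so $\bar\phi(y)=\beta(x)$ for some $x\in B_0$ and $(x,y)\in A_0'$. Second, $A_0'$ is a ring of definition of $A$: under the identification $A=B\times_{\ol B}\ol A$ one has $A_0'=\phi^{-1}(B_0)\cap\alpha^{-1}(\ol A_0)$, which is open; and since $z\mapsto(\phi(z),\alpha(z))$ is a continuous bijection of the complete group $A$ onto the closed subgroup $\{(x,y):\beta(x)=\bar\phi(y)\}$ of the complete metrizable group $B\times\ol A$, it is a homeomorphism, so the topology of $A$ is the subspace topology from $B\times\ol A$ and $A_0'=A\cap(B_0\times\ol A_0)$ is bounded, being contained in the bounded set $B_0\times\ol A_0$. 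Third, for the refined statement $\ol A_0$, $B_0$, $\ol B_0$ are noetherian and finite dimensional by construction, and $A_0'$ contains $A_0$ (because $\phi(A_0)\subseteq B_0$ and $\alpha(A_0)=\ol A_0$); any ring of definition of $A$ containing a noetherian finite dimensional ring of definition $A_0$ is again such, being an $A_0$-submodule of the finite free $A_0$-module $\varpi_A^{-M}A_0$ in which it sits by commensurability, hence finite over $A_0$.

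The one genuinely delicate point is the boundedness of $A_0'$ — equivalently, that $A$ carries the fibre product topology — which is exactly where completeness of the rings enters, via the open mapping theorem. A secondary technical care is that an arbitrary bounded subset of a Tate ring need not lie in any ring of definition (e.g.\ $\{\varpi^{-1}\}$ does not), so when transporting or enlarging rings of definition one must work with bounded \emph{subrings} and with the additive span of products of rings of definition, which stays inside the class of rings of definition; everything else is routine.
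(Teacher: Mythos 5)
Your proposal is correct and follows essentially the same route as the paper: enlarge a ring of definition of $B$ to contain $\phi(A_0)$ via the product $\phi(A_0)\cdot B_0^{\flat}$, take images to get $\ol A_0$ and $\ol B_0$, define the top-left corner as the fibre product $B_0\times_{\ol B_0}\ol A_0$, and use Banach's open mapping theorem to see that $A\to B\times_{\ol B}\ol A$ is a homeomorphism, whence this fibre product is bounded (and open, as it contains $A_0$). The noetherian/finite-dimensional refinement via finiteness of any larger ring of definition over $A_0$ is also exactly the paper's argument.
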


\begin{proof}
First note that if $A$ admits the noetherian ring of definition $A_{0}$ and $A_{1}$ is another ring of definition containing $A_{0}$, then $A_{1}$ is finite over $A_{0}$ and in particular noetherian with $\dim(A_{0}) = \dim(A_{1})$. Indeed, if $\pi \in A_{0}$ is a pseudo-uniformizer, then by boundedness $A_{1}$ is contained in the free $A_{0}$-module $\pi^{-n}A_{0}$ for $n$ big enough.
Keeping this in mind, the second claim will be clear from the following construction.

Choose rings of definition $A_{0}$ and $B_{0}$ of $A$ and $B$, respectively. Replacing
$B_{0}$ by $B_{0}\cdot \phi(A_{0})$ if necessary,  we may assume that $\phi$ restricts to
a map $A_{0} \to B_{0}$. Now let $\ol A_{0}$ be the image of $A_{0}$ in $\ol A$ and   let
$\ol B_{0}$ be the image of $B_0$ in $\ol B$. Define $A_{1} \subseteq A$ as the pullback
\[
A_{1} = B_{0} \times_{\ol B_{0}}\ol A_{0}.
\]
By Banach's open mapping theorem (see \cite{Henkel:2014to} for a version for Tate rings) the continuous isomorphism $A \to B \times_{\ol B} \ol A$ is a homeomorphism. As $B_{0} \subseteq B$ and $\ol A_{0} \subseteq \ol A$ are bounded, this implies that $A_{1} \subseteq A$ is bounded. As it also contains $A_{0}$, it is open and hence a ring of definition. Replacing $A_{0}$ by $A_{1}$ we have constructed the desired rings of definition.
\end{proof}

\begin{thm}
	\label{thm:excision}
Let \eqref{Milnor-square} be a Milnor square of complete Tate rings. Assume that $A$ and $B$ admit noetherian rings of definition that are finite dimensional. Then the induced square of pro-spectra
\[
  \tag{$\boxed{\mathrm{an}}$}
\begin{tikzcd}
 K^{\an}(A) \ar[d]\ar[r] & K^{\an}(\ol A) \ar[d] \\ 
 K^{\an}(B) \ar[r] & K^{\an}(\ol B) 
\end{tikzcd}
\]
is weakly cartesian.
\end{thm}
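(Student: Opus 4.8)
The plan is to reduce, via the fundamental fibre sequence of Theorem~\ref{thm.weakfibresequence}, to a pro-excision statement for ordinary algebraic $K$-theory of Noetherian rings. First, by Lemma~\ref{lem:milnor-square-ring-of-def} we may choose Noetherian, finite-dimensional rings of definition $A_0,\ol A_0,B_0,\ol B_0$ forming a Milnor square of complete $\pi$-adic rings with $A_0=B_0\times_{\ol B_0}\ol A_0$; write $I_0=\ker(A_0\to\ol A_0)\cong\ker(B_0\to\ol B_0)$ for the common ideal, and recall from the proof of that lemma that $B_0,\ol A_0,\ol B_0$ are finite $A_0$-modules. All four rings of definition are subrings of the corresponding Tate rings, hence $\pi$-torsion free.

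Next, applying Theorem~\ref{thm.weakfibresequence} to the four Tate rings together with these compatible rings of definition produces a natural transformation of weak fibre sequences of pro-spectra; passing to cofibres of the vertical maps identifies the square $(\boxed{\mathrm{an}})$ with the cofibre, formed in the $\infty$-category of commuting squares in $\Pro(\Sp)$, of the map from the square $\mathcal F$ with vertices $K^{\pi^{\infty}}(-\on(\pi))$ to the square $\mathcal E$ with vertices $K^{\cont}(-)$, both evaluated on $A_0,\ol A_0,B_0,\ol B_0$. Since total fibres are finite limits and cofibres are finite colimits, which commute in the stable $\infty$-category $\Pro(\Sp)$, the total fibre of $(\boxed{\mathrm{an}})$ is the cofibre of the map induced on total fibres of $\mathcal F$ and $\mathcal E$, and it suffices to show this map is a weak equivalence. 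Half of the input is cheap: by Lemma~\ref{lem:K-pi-infty-KH} there is a fibre sequence $K^{\pi^{\infty}}(A_0\on(\pi))\to K^{\pi^{\infty}}(A_0)\to KH(A)$, and homotopy $K$-theory, being a truncating invariant, satisfies Milnor excision; hence the square of $KH$'s of the four Tate rings has vanishing total fibre, and the total fibre of $\mathcal F$ agrees with that of the square of $K^{\pi^{\infty}}(-)$ on $A_0,\ol A_0,B_0,\ol B_0$.

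It therefore remains to compare the failures of Milnor excision of $\prolim{m}K(-/(\pi^m))$ and of $\prolim{j}\lvert[p]\mapsto K(-[\Delta^p_{\pi^j}])\rvert$ on the Milnor square of rings of definition. The key point is that for every $m$ the reduction
\[
\begin{tikzcd}
A_0/(\pi^m)\ar[r]\ar[d]&\ol A_0/(\pi^m)\ar[d]\\
B_0/(\pi^m)\ar[r]&\ol B_0/(\pi^m)
\end{tikzcd}
\]
is again a Milnor square of Noetherian rings of dimension $\le\dim A_0$: since $\ol B_0$ is $\pi$-torsion free, the short exact sequence $0\to A_0\to B_0\oplus\ol A_0\to\ol B_0\to0$ of finite $A_0$-modules stays exact after $-\otimes_{\Z}\Z/(\pi^m)$, and the same holds after the flat base change $A_0\to A_0[\Delta^p_{\pi^j}]$. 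Weibel's $K$-dimension conjecture \cite{KST-Weibel} bounds the connectivity of all the $K$-theory spectra in play uniformly in $p,j,m$, so by the spectral sequence computing the homotopy of a geometric realization --- and since finite limits and colimits in $\Pro(\Sp)$ are computed levelwise --- the comparison reduces to a statement about birelative $K$-theory of the common ideals $I_0[\Delta^p_{\pi^j}]$ and of their images modulo $\pi^m$. That the $\pi$-adic excision defect (governing $\mathcal E$) coincides with the homotopy-invariant one (governing $\mathcal F$) is then obtained by combining Morrow's pro-excision theorem in $K$-theory of Noetherian rings with the Artin--Rees lemma, used to identify the pro-systems $\{I_0/(I_0\cap\pi^mA_0)\}_m$ and $\{I_0/\pi^mI_0\}_m$, and the continuity of algebraic $K$-theory along the $\pi$-adic filtration for finite modules over a Noetherian $\pi$-complete ring, as established in \cite{KSTI}. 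This last step is the main obstacle: neither $K^{\cont}$ nor $K^{\pi^{\infty}}$ is truncating, so Milnor excision fails for each separately, and the content is precisely that these two failures of excision --- both measured by birelative $K$-theory of the relevant ideals --- cancel in the fundamental fibre sequence, which forces one to keep track simultaneously of the $I_0$-adic and $\pi$-adic filtrations, the simplicial direction, and the uniform connectivity bounds needed to pass through the geometric realizations and pro-limits.
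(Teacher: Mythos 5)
Your overall architecture --- choose compatible noetherian finite-dimensional rings of definition via Lemma~\ref{lem:milnor-square-ring-of-def}, map the square of fundamental fibre sequences from Theorem~\ref{thm.weakfibresequence} onto $(\boxed{\mathrm{an}})$, and reduce to comparing the squares built from $K^{\cont}$ and from $K^{\pi^{\infty}}(-\on(\pi))$ --- agrees with the paper's up to the point where the real work starts. But at that point there is a genuine gap, and you have in fact flagged it yourself by calling the last step ``the main obstacle'': you are left needing to show that the excision defects of $K^{\cont}$ and of $K^{\pi^{\infty}}(-\on(\pi))$ on the \emph{fixed} Milnor square $(A_{0}, \ol A_{0}, B_{0}, \ol B_{0})$ cancel, and none of the tools you invoke delivers this. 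Morrow's pro-excision theorem is a statement about the pro-system over \emph{powers of the ideal} $I_{0}$; it says nothing about the birelative $K$-theory of the single ideal $I_{0}$ (reduced mod $\pi^{n}$, or after $-[\Delta^{p}_{\pi^{j}}]$ and realization), which is genuinely nonvanishing. The Artin--Rees identification of $\{I_{0}/(I_{0}\cap\pi^{m}A_{0})\}_{m}$ with $\{I_{0}/\pi^{m}I_{0}\}_{m}$ only concerns the $\pi$-adic pro-direction and does not convert it into the $I_{0}$-adic one, and there is no ``continuity of $K$-theory along the $\pi$-adic filtration'' result in \cite{KSTI} that closes this gap. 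So the proposed cancellation is asserted, not proved.

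The missing idea is the paper's use of \emph{nilinvariance of $K^{\an}$}: since $(A_{0}/I_{0}^{m})[\tfrac{1}{\pi}]\to \ol A$ is a nilpotent extension of Tate rings, $K^{\an}((A_{0}/I_{0}^{m})[\tfrac{1}{\pi}])\to K^{\an}(\ol A)$ is a weak equivalence by \cite[Prop.~6.17]{KSTI}, and likewise for $B$. This lets one replace the right-hand column of the $K^{\cont}$- and $K^{\pi^{\infty}}$-squares by $\prolim{m}K^{\cont}(A_{0}/I_{0}^{m})$, resp.\ $\prolim{m}K^{\pi^{\infty}}(A_{0}/I_{0}^{m}\on(\pi))$, while keeping the cofibre equal to $K^{\an}(\ol A)$. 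After this replacement each of the two squares is weakly cartesian \emph{on its own}: the $K^{\cont}$-square by Morrow's pro-excision applied to the Milnor squares of $A_{0}/(\pi^{n})\to A_{0}/(\pi^{n},I_{0}^{m})$, etc., and the $K^{\pi^{\infty}}$-square by the same theorem applied to $A_{0}[\Delta^{p}_{\pi^{j}}]\to A_{0}/I_{0}^{m}[\Delta^{p}_{\pi^{j}}]$, etc., combined with the connectivity bounds from Weibel's conjecture to pass through the geometric realizations. No cancellation of two nonzero defects is ever needed. (Your auxiliary reduction of the $K^{\pi^{\infty}}(-\on(\pi))$-square to the $K^{\pi^{\infty}}(-)$-square via Milnor excision for $KH$ is correct but unnecessary once the argument is set up this way.) Without the nilinvariance step your proof does not go through.
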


\begin{proof}
Using nilinvariance of $K^{\an}$, we will reduce this to pro-excision for $K^{\cont}$ and $K^{\pi^{\infty}}$ of adic rings, using Theorem~\ref{thm.weakfibresequence}.

Choose noetherian, finite dimensional rings of definition as in Lemma~\ref{lem:milnor-square-ring-of-def},
and let $I_{0} = \ker(A_{0} \to \ol A_{0})$, $J_{0} = \ker(B_{0} \to \ol B_{0})$, so that $I_{0} \cong J_{0}$ via $\phi$.
For every $m\geq 1$ 
\[
(A_{0}/I_{0}^{m}) [\tfrac{1}{\pi}] \to (A_{0}/I_{0})[\tfrac{1}{\pi}] \cong \ol A
\]
is then a nilpotent extension of Tate rings and hence 
\[
K^{\an}((A_{0}/I_{0}^{m}) [\tfrac{1}{\pi}]) \to K^{\an}(\ol A)
\]
is a weak equivalence by \cite[Prop.~6.17]{KSTI}. From Theorem~\ref{thm.weakfibresequence} we therefore deduce a weak fibre sequence
\[
\prolim{m} K^{\pi^{\infty}}( A_{0}/I_{0}^{m} \on (\pi) ) \to \prolim{m} K^{\cont}(A_{0}/I_{0}^{m}) \to K^{\an}(\ol A)
\]
and similarly for $B_{0}$, $J_{0}$.
  So we get a weak fibre
  sequence of squares of pro-spectra
  \[
 \boxed{\pi^\infty}\to \boxed{\mathrm{cont}} \to \boxed{\mathrm{an}} .
\]
The theorem thus follows from the following two
pro-excision   Claims \ref{claim1}, \ref{claim2}.
\end{proof}

\begin{claim}
	\label{claim1}
The square of pro-spectra
\[
  \tag{$\boxed{\mathrm{cont}}$}
\begin{tikzcd}
K^{\cont}(A _{0}) \ar[r] \ar[d,"\phi"] & \prolim{m} K^{\cont}(A_{0}/I_{0}^{m}) \ar[d] \\
K^{\cont}(B_{0}) \ar[r] & \prolim{m} K^{\cont}(B_{0}/J_{0}^{m}).
\end{tikzcd}
\]
is weakly cartesian.
\end{claim}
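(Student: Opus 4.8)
The plan is to reduce Claim~\ref{claim1} to pro-excision for the algebraic $K$-theory of noetherian rings --- in the form due to Morrow, or alternatively to the excision theorem of Land--Tamme together with its pro-refinement --- applied one layer at a time in the powers of $\pi$. Throughout, $A_0,B_0,\ol A_0,\ol B_0$ denote the noetherian, finite-dimensional rings of definition and $I_0\cong J_0$ the ideals fixed in the proof of Theorem~\ref{thm:excision}, so that $\ol A_0=A_0/I_0$ and $\ol B_0=B_0/J_0$ and $A_0=B_0\times_{\ol B_0}\ol A_0$.

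Recall that for an adic ring $R$ with pseudo-uniformizer $\pi$ one has $K^{\cont}(R)=\prolim{n}K(R/\pi^nR)$; hence
\[
\prolim{m}K^{\cont}(A_0/I_0^m)\;=\;\prolim{m,n}K\bigl(A_0/(I_0^m+\pi^nA_0)\bigr),
\]
a pro-spectrum indexed by $\N\times\N$, and likewise for $B_0$ and $J_0$. Reorganising these double pro-systems exhibits $\boxed{\mathrm{cont}}$ as the cofiltered limit over $n$ of the squares $\mathcal S_n$:
\[
\begin{tikzcd}
K(A_0/\pi^n)\ar[r]\ar[d] & \prolim{m}K\bigl(A_0/(I_0^m+\pi^nA_0)\bigr)\ar[d]\\
K(B_0/\pi^n)\ar[r] & \prolim{m}K\bigl(B_0/(J_0^m+\pi^nB_0)\bigr),
\end{tikzcd}
\]
with horizontal maps induced by the projections $A_0\to A_0/I_0^m$ (resp.\ $B_0\to B_0/J_0^m$) and vertical maps by $\phi$. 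Since finite limits of levelwise diagrams in $\Pro(\Sp)$ are computed levelwise and a tower of weakly contractible pro-spectra is again weakly contractible, it suffices to prove that each $\mathcal S_n$ is weakly cartesian.

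So fix $n$. I would first check that
\[
\begin{tikzcd}
A_0/\pi^n\ar[r]\ar[d] & \ol A_0/\pi^n\ol A_0\ar[d]\\
B_0/\pi^n\ar[r] & \ol B_0/\pi^n\ol B_0
\end{tikzcd}
\]
is a Milnor square of noetherian rings. It is obtained from the Milnor square of Lemma~\ref{lem:milnor-square-ring-of-def} by applying $-\otimes_{A_0}A_0/\pi^nA_0$, and this base change preserves the cartesian property because $\mathrm{Tor}_1^{A_0}(\ol B_0,A_0/\pi^nA_0)$ is the $\pi^n$-torsion submodule of $\ol B_0$, which vanishes: $\pi$ is a non-zerodivisor on both $A_0$ and $\ol B_0$, since any ring of definition embeds into the ambient Tate ring, in which $\pi$ is invertible. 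Let $\bar{\mathfrak a}_n\subseteq A_0/\pi^n$ be the resulting ideal, carried isomorphically by $\phi$ onto the corresponding ideal of $B_0/\pi^n$. From $(I_0+\pi^nA_0)^m+\pi^nA_0=I_0^m+\pi^nA_0$ one obtains $(A_0/\pi^n)/\bar{\mathfrak a}_n^m=A_0/(I_0^m+\pi^nA_0)$, and similarly over $B_0$. Applying pro-excision for $K$-theory to this Milnor square, along the powers of $\bar{\mathfrak a}_n$, yields exactly that $\mathcal S_n$ is weakly cartesian.

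The main obstacle is this first reduction, not the appeal to pro-excision: one must verify that reduction modulo $\pi^n$ turns the given Milnor square of adic rings into a genuine Milnor square of noetherian rings (the $\mathrm{Tor}$-vanishing above, resting on $\pi$-torsion-freeness of rings of definition), and one must disentangle the two pro-directions --- powers of $\pi$, coming from $K^{\cont}$, and powers of $I_0$, coming from the thickening --- so that pro-excision, which governs a single pro-direction inside a fixed Milnor square, can be invoked at each level $n$ separately. Granting this, Claim~\ref{claim1} is formal.
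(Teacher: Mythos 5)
Your proposal is correct and follows essentially the same route as the paper: both reduce to Morrow's pro-excision for noetherian rings applied, for each fixed $n$, to the mod-$\pi^{n}$ reduction of the Milnor square of rings of definition, with the pro-direction given by the powers of the image of $I_{0}$, and then pass to the limit over $n$. The only (cosmetic) difference is that you verify that reduction mod $\pi^{n}$ preserves the Milnor square via vanishing of $\mathrm{Tor}_{1}^{A_{0}}(\ol B_{0},A_{0}/\pi^{n})$, whereas the paper argues directly with $\pi$ being a non-zero divisor on the relevant quotients.
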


\begin{claim}
	\label{claim2}
The square of pro-spectra
\[
    \tag{$\boxed{\pi^\infty}$}
\begin{tikzcd}
K^{\pi^{\infty}}(A _{0} \on (\pi)) \ar[r] \ar[d,"\phi"] & \prolim{m} K^{\pi^{\infty}}(A_{0}/I_{0}^{m} \on (\pi)) \ar[d] \\
K^{\pi^{\infty}}(B_{0} \on (\pi)) \ar[r] & \prolim{m} K^{\pi^{\infty}}(B_{0}/J_{0}^{m} \on (\pi)).
\end{tikzcd}
\]
is weakly cartesian.
\end{claim}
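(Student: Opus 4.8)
The idea is to split Claim~\ref{claim2} into pro-excision for $K^{\pi^{\infty}}$ \emph{without} supports and ordinary Milnor excision for homotopy $K$-theory, using the localisation fibre sequence of Lemma~\ref{lem:K-pi-infty-KH}. Apply that fibre sequence to the affine $A_{0}$-schemes $\Spec(A_{0})$ and $\Spec(A_{0}/I_{0}^{m})$, and likewise over $B_{0}$, and take $\lprolim{m}$; this produces a fibre sequence of $2\times 2$-squares of pro-spectra $\boxed{\pi^{\infty}}\to\mathrm{M}\to\mathrm{N}$, where $\mathrm{M}$ has corners $K^{\pi^{\infty}}(A_{0})$, $\lprolim{m}K^{\pi^{\infty}}(A_{0}/I_{0}^{m})$, $K^{\pi^{\infty}}(B_{0})$, $\lprolim{m}K^{\pi^{\infty}}(B_{0}/J_{0}^{m})$, and $\mathrm{N}$ has corners $KH(A)$, $\lprolim{m}KH((A_{0}/I_{0}^{m})[\tfrac{1}{\pi}])$, $KH(B)$, $\lprolim{m}KH((B_{0}/J_{0}^{m})[\tfrac{1}{\pi}])$. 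Now $(A_{0}/I_{0}^{m})[\tfrac{1}{\pi}]=A/I^{m}$, where $I=\ker(A\to\ol A)$, and $I/I^{m}$ is a nilpotent ideal, so $KH$-nil-invariance makes each $KH((A_{0}/I_{0}^{m})[\tfrac{1}{\pi}])\to KH(\ol A)$ an equivalence; hence $\mathrm{N}$ is weakly equivalent to the square obtained by applying $KH$ to the underlying discrete Milnor square~\eqref{Milnor-square}. Passing to total fibres, Claim~\ref{claim2} thus becomes the conjunction of \textbf{(a)} $\mathrm{M}$ is weakly cartesian, and \textbf{(b)} $KH$ satisfies Milnor excision for~\eqref{Milnor-square}.

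Statement \textbf{(b)} holds because $KH$ is a truncating invariant in the sense of Land--Tamme, and truncating invariants satisfy Milnor excision. For \textbf{(a)} I would argue levelwise. Recall $K^{\pi^{\infty}}(R)=\lprolim{j}|[p]\mapsto K(R[\Delta^{p}_{\pi^{j}}])|$. Since $A_{0}[\Delta^{p}_{\pi^{j}}]$ is a polynomial ring over $A_{0}$, hence flat, base change along $A_{0}\to A_{0}[\Delta^{p}_{\pi^{j}}]$ turns the Milnor square of noetherian rings $(A_{0},\ol A_{0},B_{0},\ol B_{0})$ of Lemma~\ref{lem:milnor-square-ring-of-def} into the Milnor square of noetherian rings $(A_{0}[\Delta^{p}_{\pi^{j}}],\ol A_{0}[\Delta^{p}_{\pi^{j}}],B_{0}[\Delta^{p}_{\pi^{j}}],\ol B_{0}[\Delta^{p}_{\pi^{j}}])$, whose common ideal $\widetilde{I}_{0}=I_{0}A_{0}[\Delta^{p}_{\pi^{j}}]$ satisfies $A_{0}[\Delta^{p}_{\pi^{j}}]/\widetilde{I}_{0}^{m}=(A_{0}/I_{0}^{m})[\Delta^{p}_{\pi^{j}}]$. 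Write $Q_{p,j,m}$ for the square of spectra obtained by applying $K$ to this Milnor square modulo $\widetilde{I}_{0}^{m}$. Since the total fibre of a square is a finite colimit construction, it commutes with geometric realisation and with the levelwise finite limits of $\Pro(\Sp)$, and therefore $\mathrm{tfib}(\mathrm{M})\simeq\lprolim{j,m}|[p]\mapsto\mathrm{tfib}(Q_{p,j,m})|$.

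Now pro-excision for algebraic $K$-theory of noetherian rings (Morrow) shows that $\lprolim{m}\mathrm{tfib}(Q_{p,j,m})\simeq 0$ for each fixed $p$ and $j$, and Weibel's $K$-dimension conjecture~\cite{KST-Weibel} bounds every spectrum occurring in $Q_{p,j,m}$ below by $-(d+p)$, $d=\dim(A_{0})$, so that $\mathrm{tfib}(Q_{p,j,m})$ is $(-d-p-2)$-connective uniformly in $j$ and $m$. Running the spectral sequence of the geometric realisation, exactly as in the proof of Theorem~\ref{thm:fund-fib-model}, then shows that $\lprolim{m}|[p]\mapsto\mathrm{tfib}(Q_{p,j,m})|$ is weakly contractible for every $j$; taking $\lprolim{j}$ gives $\mathrm{tfib}(\mathrm{M})\simeq 0$, which is \textbf{(a)}, and completes the proof.

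The main obstacle is bookkeeping rather than conceptual: one has to keep the two pro-directions $j,m$, the geometric realisation, and the three-term fibre sequence of squares straight, and to run the realisation spectral sequence with pro-coefficients as in Theorem~\ref{thm:fund-fib-model}. The conceptual point --- and the reason the ``on $(\pi)$'' statement is no harder than the one without supports --- is that by Lemma~\ref{lem:K-pi-infty-KH} inverting $\pi$ turns $K^{\pi^{\infty}}$ into the nil-invariant functor $KH$, which does not distinguish $A_{0}/I_{0}^{m}$ from $\ol A_{0}$ after inverting $\pi$; this makes the right-hand column of the squares pro-constant, reducing the remaining content to Milnor excision for $KH$.
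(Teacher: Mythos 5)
Your overall strategy is sound but more roundabout than the paper's. The paper does not split off the supports at all: it applies Morrow's pro-excision directly to the Milnor squares $\bigl(A_0[\Delta^p_{\pi^j}],\,(A_0/I_0^m)[\Delta^p_{\pi^j}],\,B_0[\Delta^p_{\pi^j}],\,(B_0/J_0^m)[\Delta^p_{\pi^j}]\bigr)$ to get that $\prolim{m}K(-\on(\pi))$ of these squares is weakly cartesian for each fixed $p,j$ (the version with supports is the fibre of the version without supports over the version with $\pi$ inverted, and pro-excision applies to both), and then passes to realizations and $\lprolim{j}$ exactly as in Theorem~\ref{thm:fund-fib-model}. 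Your detour through the localization sequence of Lemma~\ref{lem:K-pi-infty-KH} --- reducing to pro-excision without supports plus Milnor excision and nilinvariance for $KH$ --- is correct and arguably makes the conceptual content more visible, but it imports an extra ingredient ($KH$-excision \`a la Weibel/Land--Tamme) that the paper's route does not need.

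There is, however, one step that fails as literally written: your connectivity bound. You bound the spectra in $Q_{p,j,m}$ below by $-(d+p)$, using that $\dim A_0[\Delta^p_{\pi^j}]=d+p$ and the vanishing half of Weibel's conjecture. That bound degrades with $p$, and then the skeletal filtration of $\pi_n$ of the geometric realization is \emph{infinite} for every $n\geq -d-2$: the cofibre of $\mathrm{sk}_N\to\mathrm{sk}_{N+1}$ is a shift by $N+1$ of (the nondegenerate part of) the $(N+1)$-simplices, and with your bound its $\pi_n$ need not vanish for any $N$. Pro-vanishing of each graded piece of an infinite filtration does not give pro-vanishing of the whole group, so the spectral-sequence step collapses. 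What you actually need, and what the paper uses in Theorem~\ref{thm:fund-fib-model}, is the $K_{-n}$-regularity half of Weibel's conjecture (\cite[Thm.~B]{KST-Weibel}): since $A_0[\Delta^p_{\pi^j}]\cong A_0[t_1,\dots,t_p]$, one gets $K_{-n}(A_0[\Delta^p_{\pi^j}])=K_{-n}(A_0)=0$ for $n>d$, i.e.\ a $(-d)$-connectivity bound \emph{uniform in $p$} (and in $j$, $m$). With that uniform bound only the simplices $p\leq n+d+O(1)$ contribute to $\pi_n$ of the realization, the filtration is finite, and your argument goes through. So the gap is a quotable one-line fix, but the uniformity in $p$ is genuinely the crux of the realization step and should not be stated as uniformity in $j$ and $m$ only.
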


\begin{proof}[Proof of Claim~\ref{claim1}]
As $I_{0} \xrightarrow{\sim} J_{0}$ via $\phi$, also $I_{0}^{m} \xrightarrow{\sim} J_{0}^{m}$.
As $\pi$ is a non-zero divisor on $A_{0}/I_{0}$, it is also a non-zero divisor on $A_{0}/I_{0}^{m}$ for all $m$, and similarly on $B_{0}/J_{0}^{m}$. It follows from these two observations that for every pair of positive integers $n$, $m$ we have a Milnor square
\begin{equation}
	\label{mmm}
\begin{tikzcd}
A _{0}/(\pi^{n})\ar[r] \ar[d,"\phi"] & A_{0}/(\pi^{n}, I_{0}^{m}) \ar[d] \\
B_{0}/(\pi^{n}) \ar[r] & B_{0}/(\pi^{n},J_{0}^{m}).
\end{tikzcd}
\end{equation}
Pro-excision for noetherian rings \cite[Cor.~0.4]{Morrow} (see also \cite[Sec.~2.4]{LandTamme} for an account) implies that the square of $K$-theory pro-spectra
\[
\prolim{m} K(\eqref{mmm})
\]
is weakly contractible for every $n$. Taking the limit over $n$ gives the claim.
\end{proof}

\begin{proof}[Proof of Claim~\ref{claim2}]
From pro-excision for noetherian rings, applied to the Milnor squares
\begin{equation}
	\label{nnn}
\begin{tikzcd}
A _{0}[\Delta^{p}_{\pi^{j}}]\ar[r] \ar[d,"\phi"] & A_{0}/I_{0}^{m} [\Delta^{p}_{\pi^{j}}]\ar[d] \\
B_{0} [\Delta^{p}_{\pi^{j}}]\ar[r] & B_{0}/J_{0}^{m}[\Delta^{p}_{\pi^{j}}] ,
\end{tikzcd}
\end{equation}
we deduce that for fixed $p$ and $j$, the square of pro-spectra
\[
\prolim{m} K( \eqref{nnn} \on (\pi) ) 
\]
is weakly cartesian. As in the proof of Theorem~\ref{thm:fund-fib-model}, using finite dimensionality and Weibel's conjecture we deduce that we can pass to geometric realizations, and deduce the claim.
\end{proof}

\section{Descent and applications}
\label{sec:descent}

\subsection{Descent for analytic \textit{K}-theory}

Let $(A, A^{+})$ be an affinoid ring in the sense of \cite[Sec.~3]{Huber}, and let $X = \Spa(A,A^{+})$. We assume that $A$ is Tate and that it admits a noetherian, finite dimensional ring of definition $A_{0}$. If $U \subseteq X$ is a rational subdomain, then the same conditions hold for the affinoid ring $(\cO_{X}(U), \cO_{X}(U)^{+})$. We fix a pseudo-uniformizer $\pi$ of $A$. Then (the image of) $\pi$ is also a pseudo-unifomizer for every Tate ring $\cO_{X}(U)$.
For ease of notation we set
\[
K^{\an}(U) := K^{\an}(\cO_{X}(U)).
\]
Let $f_{1}, \dots, f_{n} \in A$ be elements generating $A$ as an ideal. These give rise to the standard rational covering $\cU = (U_i)_{1\leq i\leq n}$ of $X$ 
where 
\[
U_{i} = X(\frac{f_{1}, \dots, f_{n}}{f_{i}}).
\]
Write
\begin{equation}\label{eq;KanCechsieve}
 K^{\an}(|\cU_{\bullet}| ) := \lim_{p \in \Delta} \Kan(\cU_{p}),
\end{equation}
where $\cU_{\bullet}$ is the \v{C}ech nerve of the covering $\cU$.

\begin{thm}
	\label{thm:descent}
In the above situation, the natural map  
\[ 
\Kan(A) \to \Kan(|\cU_{\bullet}| ) 
\]
is a weak equivalence of pro-spectra.
\end{thm}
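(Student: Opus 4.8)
The plan is to reduce descent for $\Kan$ to descent for its constituents in the fundamental fibre sequence of Theorem~\ref{thm.weakfibresequence}, namely $\Kcont$ of a ring of definition and $\KHpiinf(-\suppi)$, using a Raynaud-type model of the rational covering. First I would choose a noetherian, finite dimensional ring of definition $A_{0}$ of $A$ with pseudo-uniformizer $\pi$, and lift the functions $f_{1},\dots,f_{n}$ (after scaling by a power of $\pi$) to elements of $A_{0}$. One then builds an admissible blow-up $p\colon \mathcal{X} \to \Spec(A_{0})$ — the blow-up along the ideal $(f_{1},\dots,f_{n},\pi)$ — whose generic fibre is $\Spec(A)$ and which carries an open covering $\mathcal{X} = \bigcup_{i} \mathcal{X}_{i}$ with $\mathcal{X}_{i} = \Spec(\mathcal{O}_{X}(U_{i})_{0})$ for suitable rings of definition of the $\mathcal{O}_{X}(U_{i})$, and likewise for all finite intersections (the \v{C}ech nerve is computed by the intersections $\mathcal{X}_{i_{0}}\cap\dots\cap\mathcal{X}_{i_{p}}$, which are again affine with generic fibre the affinoid $\cU_{p}$). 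This is the standard dictionary between rational coverings of $\Spa(A,A^{+})$ and Zariski coverings of admissible formal/algebraic models; the point is that it lets us replace the limit over $\Delta$ of $\Kan(\cU_{p})$ with a limit computed on an honest Zariski cover of a scheme.

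The second step is to apply Theorem~\ref{thm:fund-fib-model} to $p\colon \mathcal{X}\to\Spec(A_{0})$ and to each of the affine opens and their intersections. Since the term $\Kan(A)$ in that fibre sequence does not depend on the model, taking the totalization over the \v{C}ech nerve of the cover $\{\mathcal{X}_{i}\}$ yields a weak fibre sequence of \v{C}ech complexes, so it suffices to prove \v{C}ech descent separately for $\Kcont(-) = \prolim{j} K(-\otimes_{A_{0}} A_{0}/(\pi^{j}))$ and for $\KHpiinf(-\suppi)$ along the cover $\{\mathcal{X}_{i}\}$ of $\mathcal{X}$. For $\Kcont$: each $\mathcal{X}\otimes A_{0}/(\pi^{j})$ is a noetherian qcqs scheme and $\{\mathcal{X}_{i}\otimes A_{0}/(\pi^{j})\}$ is a finite Zariski (hence Nisnevich, hence cdh) cover, so nonconnective $K$-theory satisfies descent for it by Thomason--Trobaugh; since the covering is independent of $j$, the levelwise equivalence of \v{C}ech complexes passes to the pro-spectrum limit $\prolim{j}$. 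For $\KHpiinf(-\suppi)$: by Lemma~\ref{lem:K-pi-infty-KH} there is, for each object, a fibre sequence $\KHpiinf(-\suppi) \to \KHpiinf(-) \to KH((-)[\tfrac{1}{\pi}])$; the right-hand term is $KH$ of the generic fibre, and on the \v{C}ech nerve the generic fibres recover the rational covering $\{U_{i}\}$ of $X$, which is a Zariski cover of the affinoid scheme $\Spec(A)$, so $KH$ — being a localizing, $\A^1$-invariant, truncating invariant — satisfies descent for it. For the middle term $\KHpiinf(-)$, one runs the same argument: $\KHpiinf$ is built as a geometric realization of a pro-system of $K$-theory spectra of $A_{0}[\Delta^{p}_{\pi^{j}}]$-schemes, each of which is noetherian qcqs with the base-changed Zariski cover, so Thomason--Trobaugh descent applies levelwise in $p$ and $j$; using finite-dimensionality and Weibel's conjecture (\cite{KST-Weibel}) exactly as in the proof of Theorem~\ref{thm:fund-fib-model} — all the relevant spectra are uniformly $(-d-1)$-connective — one sees the geometric realization and the pro-limit over $j$ commute with the (finite) \v{C}ech totalization, so descent survives. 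Combining the two outer descent statements via the fibre sequence gives descent for $\KHpiinf(-\suppi)$, and then the fibre-sequence-of-\v{C}ech-complexes argument gives the theorem.

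The main obstacle is the geometric input of the first step: producing an admissible model $\mathcal{X}$ together with the affine open cover whose generic-fibre data reproduces the \v{C}ech nerve of $\cU$ on the nose, with all the intersections again affine and carrying compatible noetherian finite-dimensional rings of definition. Concretely one must check that the rings of definition $\mathcal{O}_{X}(U_{i})_{0}$ arising from the blow-up chart — essentially $A_{0}[\tfrac{f_{1}}{f_{i}},\dots,\tfrac{f_{n}}{f_{i}}]$ completed — are bounded and open in $\mathcal{O}_{X}(U_{i})$, hence genuine rings of definition, that they are noetherian (finite over $A_{0}$ modulo the boundedness remark in Lemma~\ref{lem:milnor-square-ring-of-def}) and of the same Krull dimension, and that the \v{C}ech nerve of the Zariski cover $\{\mathcal{X}_{i}\}$, after inverting $\pi$, is precisely $\Spec$ of the \v{C}ech nerve of $\cU$; this is where Huber's theory of rational subdomains and the reduced fibre/Gerritzen--Grauert machinery (or just the explicit blow-up charts) is needed. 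Once this translation is in place, the $K$-theoretic part is formal: it is just Thomason--Trobaugh Zariski descent and $KH$-descent combined levelwise, with Weibel's conjecture controlling the interchange of limits.
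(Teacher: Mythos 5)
Your proposal is correct and follows essentially the same route as the paper: blow up $\Spec(A_{0})$ along the ideal generated by the (rescaled) $f_{i}$ to get an admissible model whose standard affine charts complete to rings of definition of the $\cO_{X}(U_{i})$, apply the fundamental fibre sequences (Theorems~\ref{thm:fund-fib-model} and~\ref{thm.weakfibresequence}) to the model and to the Čech pieces, and conclude by Thomason--Trobaugh Zariski descent for the $K^{\cont}$ and $K^{\pi^{\infty}}(-\on(\pi))$ columns, using that the $n$-skeletal Čech totalization is a finite limit. The only (harmless) deviations are blowing up along $(f_{1},\dots,f_{n},\pi)$ rather than $(f_{1},\dots,f_{n})$, and the detour through $KH$ of the generic fibre for the $K^{\pi^{\infty}}(-\on(\pi))$ term, where the paper just applies Zariski descent levelwise directly.
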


\begin{proof}
Fix a finite dimensional, noetherian ring of definition $A_{0}$ of $A$.
Multiplying by a power of $\pi$, we may assume that the $f_{i}$ belong to $A_{0}$. 
Let $I \subseteq A_{0}$ be the ideal generated by $f_{1}, \dots, f_{n}$, and let
\[
Y = \Proj(\bigoplus_{k\geq 0} I^{k})
\]
be the blowup of $\Spec(A_{0})$ along the ideal $I$. 
Then $Y \to \Spec(A_{0})$ is an admissible morphism.
Let $V_{i} = D_{+}(f_{i}) \subseteq Y$.
Then
$\cV = (V_{i})_{1\leq i\leq n}$ is an affine open covering of $Y$.  The $\pi$-adic completion $\cO_{Y}(V_{i})^{\wedge}$ can be canonically identified with a ring of definition of the Tate ring $\cO_{X}(U_{i})$, and similarly for the higher intersections $V_{i_{0}} \cap \dots \cap V_{i_{p}}$.

Consider the following diagram of pro-spectra
\begin{equation}\label{diag:descent}
\begin{split}
\xymatrix{
K^{\pi^{\infty}}(Y \on (\pi)) \ar[r]\ar[d] 
    		& \Kcont(Y) \ar[r]\ar[d]  & \Kan(A) \ar[d] \\
K^{\pi^{\infty}}(|\cV_{\bullet}| \on (\pi)) \ar[r] 
    		& \Kcont(|\cV_{\bullet}|) \ar[r] & \Kan(|\cU_{\bullet}|)\;,
}
\end{split}
\end{equation}
where $K^{\cont}(|\cV_{\bullet}|)$ and $K^{\pi^{\infty}}(|\cV_{\bullet}|\on (\pi))$ are defined as \eqref{eq;KanCechsieve}.
The top line is a weak fibre sequence by Theorem~\ref{thm:fund-fib-model}.
Note that the coverings $\cU$ and $\cV$ consist of $n$ open subsets of $X$ and $Y$, respectively, and thus their \v{C}ech nerves are $n$-skeletal. 
This implies that the limit
\[
K^{\cont}(|\cV_{\bullet}|) = \lim_{[p]\in\Delta} K^{\cont}(\cV_{p})
\]
in $\Pro(\Sp)$ can be computed level-wise (see e.g.~\cite[Lem.~2.1]{KSTI}) and similarly for $K^{\pi^{\infty}}(|\cV_{\bullet}| \on (\pi))$.
From Zariski descent for non-connective $K$-theory \cite[Thm.~8.1]{thomason} we thus deduce that the left two vertical maps in \eqref{diag:descent} are equivalences of pro-spectra.

Let 
\[
V_{i}' = \Spec(\cO_{Y}(V_{i})^{\wedge})
\]
and similarly for the higher intersections.
By Thomason's excision \cite[Prop.~3.19]{thomason} we have an equivalence
\[
K^{\pi^{\infty}}(|\cV_{\bullet}|\suppi) \simeq K^{\pi^{\infty}}(|\cV'_{\bullet}|\suppi),
\]
and we clearly also have $\Kcont(|\cV_{\bullet}|) \simeq \Kcont(|\cV'_{\bullet}|)$. Hence we deduce from Theorem~\ref{thm.weakfibresequence}
that the lower horizontal line in \eqref{diag:descent} is also a fibre sequence. We conclude that  the right vertical map is a weak equivalence, as was to be shown.
\end{proof}

\subsection{Globalization}

In order to globalize the construction of analytic $K$-theory, we first recall some sheaf theory.
Let $C$ be a small category. We denote by
\[
\PSh(C) = \Fun({C^{op}, \Spaces})
\]
 the $\infty$-category of presheaves of spaces on $C$ and by $y \colon C \to \PSh(C)$ the Yoneda embedding. If $F$ and $U$ are presheaves, we write 
 \[
 F(U) = \Map_{\PSh(C)}(U,F).
 \]
Note that by the Yoneda lemma, we have $F(X) = F(y(X))$ for every object $X$ of $C$.

If $\bD$ is any complete $\infty$-category, then composition with the Yoneda embedding induces an equivalence 
\begin{equation}\label{eq:presheaves-limit-preserving-functors}
\Fun^{lim}(\PSh(C)^{op}, \bD) \xrightarrow{\simeq} \Fun(C^{op},\bD)
\end{equation}
where $\Fun^{lim}(\PSh(C)^{op}, \bD) \subseteq \Fun(\PSh(C)^{op}, \bD)$ denotes the full subcategory spanned by the limit preserving functors
(see \cite[Thm.~5.1.5.6]{HTT}).

We now assume that $C$ is equipped with a (Grothendieck) topology. 
There is a bijection between sieves on an object $X \in C$ and subobjects $U \hookrightarrow y(X)$ of $y(X)$ in $\PSh(C)$ \cite[Prop.~6.2.2.5]{HTT}. For simplicity, we call the latter sieves, too. 
By definition, a presheaf $F \in \PSh(C)$ is a sheaf if and only if the natural map 
\[
F(X) \to F(U)
\]
is an equivalence  for every $X \in C$ and every covering sieve $U \hookrightarrow y(X)$ of $X$.
We denote by 
\[
\Sh(C) \subseteq \PSh(C)
\]
the full subcategory consisting of sheaves. 
The inclusion $\Sh(C) \hookrightarrow \PSh(C)$ admits a  left adjoint $L\colon \PSh(C) \to \Sh(C)$ called sheafification \cite[Prop.~5.5.4.15, Lemma~6.2.2.7]{HTT}. 
We say that a functor $F \in \Fun(C^{op}, \bD)$ is a ($\bD$-valued) sheaf if $F(X) \to F(U)$ is an equivalence in $\bD$ for every $X$ and covering sieve $U\hookrightarrow y(X)$ as above. Here $F(U)$ is defined using~\eqref{eq:presheaves-limit-preserving-functors} above.
By \cite[Prop.~5.5.4.20]{HTT} composition with $L$ induces a fully faithful functor
\[
\Fun^{lim}(\Sh(C)^{op}, \bD) \to \Fun^{lim}(\PSh(C)^{op}, \bD) \simeq \Fun(C^{op}, \bD)
\]
whose essential image consists precisely of the $\bD$-valued sheaves.

We now  return to analytic $K$-theory of Tate rings. Fix a complete affinoid Tate ring $(R, R^{+})$ such that $R$ admits a finite dimensional, noetherian ring of definition $R_{0}$, and let $S = \Spa(R,R^{+})$. We denote by 
\[
\Ad_{S}^{\aff, \ft} \subseteq  \Ad_{S}^{\sep, \lft} \subseteq \Ad_{S}^{\lft}
\]
the categories of affinoid adic spaces of finite type over $S$, of adic spaces separated and locally of finite type over $S$, and of adic spaces locally of finite type over $S$, respectively.
See \cite[Sec.~3]{Huber-gen} and \cite[Sec.~1.2, 1.3]{Huber-book} for these notions.
Note that for any $X \in \Ad_{S}^{\lft}$ and any open affinoid $U=\Spa(A,A^{+}) \subseteq X$, $A$ admits a finite dimensional, noetherian ring of definition. Indeed, by \cite[Prop.~3.6]{Huber-gen} the map $(R,R^{+}) \to (A, A^{+})$ is topologically of finite type. By Lemma 3.3(iii) there, $R \to A$ factors through a continuous open surjection $R\langle x_{1}, \dots x_{n} \rangle \to A$, and we can take the image of $R_{0}\langle x_{1}, \dots, x_{n}\rangle$ in $A$ as a ring of definition.

We equip the categories of adic spaces above with the usual topologies, see \cite[Sec.~2]{Huber}.
We then have:
\begin{lemma}
	\label{lem:sheaves}
The inclusion $\Ad_{S}^{\aff,\ft} \subseteq \Ad_{S}^{\lft}$ induces an equivalence 
\[
\Sh( \Ad_{S}^{\lft} ) \simeq \Sh( \Ad_{S}^{\aff,\ft}).
\]
\end{lemma}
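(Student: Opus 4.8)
The plan is to identify this as an instance of a general comparison of sheaf categories for a site and a basis for its topology. The category $\Ad_S^{\aff,\ft}$ of affinoid adic spaces of finite type over $S$ is closed under fibre products (intersections of rational subdomains stay affinoid of finite type) and every object of $\Ad_S^{\lft}$ admits an open covering by such affinoids; moreover, the rational/admissible coverings generating the topology on $\Ad_S^{\lft}$ can be refined to coverings by affinoids of finite type. So $\Ad_S^{\aff,\ft}$ is a \emph{basis} (a \emph{dense subsite} in the sense of \cite[Sec.~C.2.2]{HTT} or the comparison lemma for sites) for the topology on $\Ad_S^{\lft}$. The comparison lemma then yields the asserted equivalence of $\infty$-topoi $\Sh(\Ad_S^{\lft}) \simeq \Sh(\Ad_S^{\aff,\ft})$, with the equivalence induced by restriction along the inclusion.

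Concretely, I would first check the two hypotheses needed to apply the comparison lemma. \textbf{(1)} The inclusion $\iota\colon \Ad_S^{\aff,\ft}\hookrightarrow \Ad_S^{\lft}$ is continuous and cocontinuous for the given topologies: a covering family of an affinoid $X\in\Ad_S^{\aff,\ft}$ by objects of $\Ad_S^{\lft}$ can be refined, using quasi-compactness of $X$ and the fact that rational subdomains form a basis of the topology of $X$, to a finite covering by rational subdomains, which are again affinoid of finite type over $S$ by the remarks preceding the lemma (using \cite[Prop.~3.6]{Huber-gen}). \textbf{(2)} Every $X\in\Ad_S^{\lft}$ has a covering sieve generated by objects of $\Ad_S^{\aff,\ft}$, namely the sieve generated by all open affinoid subspaces $U=\Spa(A,A^+)\subseteq X$ of finite type over $S$; these exist and cover $X$ since $X$ is locally of finite type over $S$. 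Together these say precisely that $\Ad_S^{\aff,\ft}$ generates the topology and is closed under the relevant fibre products, so the restriction functor $\Sh(\Ad_S^{\lft})\to\Sh(\Ad_S^{\aff,\ft})$ is an equivalence.

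For the $\infty$-categorical formulation I would invoke \cite[Lemma~C.3.1.5]{SAG} (Lurie's comparison lemma for $\infty$-topoi), or argue directly: left Kan extension along $\iota^{op}$ followed by sheafification provides a functor $\Sh(\Ad_S^{\aff,\ft})\to\Sh(\Ad_S^{\lft})$; one checks it is inverse to restriction by verifying that for a sheaf $F$ on $\Ad_S^{\lft}$ the counit $L\,\iota_!\,\iota^*F\to F$ is an equivalence, which reduces to evaluating on affinoids of finite type and using that these form a basis, and that for a sheaf $G$ on $\Ad_S^{\aff,\ft}$ the unit $G\to \iota^*L\,\iota_! G$ is an equivalence, which follows because $\iota$ is fully faithful and the topology on $\Ad_S^{\aff,\ft}$ is the one induced from $\Ad_S^{\lft}$.

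The main obstacle is the verification that $\Ad_S^{\aff,\ft}$ is genuinely closed under the fibre products appearing in \v{C}ech nerves of admissible coverings — i.e.\ that intersections of affinoid subdomains of finite type over $S$ remain affinoid of finite type over $S$ — and that the topology on $\Ad_S^{\lft}$ restricts to the expected topology on $\Ad_S^{\aff,\ft}$ (so that "sheaf" means the same thing on both sides). Both are essentially the content of Huber's results \cite[Prop.~3.6, Lemma~3.3]{Huber-gen} together with the fact that rational subdomains form a basis for the topology of an affinoid adic space, but care is needed with the finite-type conditions and with separatedness when forming fibre products; one works inside $\Ad_S^{\sep,\lft}$ where fibre products of affinoids are representable and affinoid, which is why that intermediate category appears. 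Once this bookkeeping is in place, the comparison lemma applies verbatim.
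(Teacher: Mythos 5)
Your proposal is correct and follows essentially the same route as the paper: the paper also deduces the statement from a comparison lemma for dense subsites (it cites the version in the appendix of Hoyois's trace formula paper), applied in two steps through the intermediate category $\Ad_{S}^{\sep,\lft}$ precisely because, as you note at the end, intersections of affinoids are only guaranteed to be affinoid inside a separated space. The geometric inputs you identify (coverings with separated, resp.\ affinoid, intersections and stability of finite-type affinoids under rational localization) are exactly the ones the paper uses.
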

\begin{proof}
This follows by applying \cite[Lemma~C.3]{Hoyois-trace-formula} to both inclusions $\Ad_{S}^{\aff, \ft} \subseteq  \Ad_{S}^{\sep, \lft} \subseteq \Ad_{S}^{\lft}$. Here we use that every $X \in \Ad_{S}^{\lft}$ has a covering $\{ U_{i} \to X\}$ such that all intersections $U_{i_{0}} \cap \dots \cap U_{i_{n}}$ are separated, and any separated $X$ has such a covering with all intersections affinoid.
\end{proof}

\begin{cor}
The presheaf
\[
K^{\an}\colon \Ad_{S}^{\aff,\ft,op} \to \Pro(\Sp^{+}), \quad \Spa(A,A^{+}) \mapsto K^{\an}(A),
\]
is a sheaf. In particular, it extends essentially uniquely to a sheaf with values in $\Pro(\Sp^{+})$ on the category $\Ad^{\lft}_{S}$.
\end{cor}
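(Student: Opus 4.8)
The plan is to verify the sheaf condition directly on the site $\Ad_S^{\aff,\ft}$ with its usual topology, and then invoke the last displayed result in the globalization subsection (the consequence of \cite[Prop.~5.5.4.20]{HTT} applied to $L\colon\PSh(\Ad_S^{\aff,\ft})\to\Sh(\Ad_S^{\aff,\ft})$ together with Lemma~\ref{lem:sheaves}) to extend $K^{\an}$ essentially uniquely to a $\Pro(\Sp^+)$-valued sheaf on $\Ad_S^{\lft}$. Since the topology on $\Ad_S^{\aff,\ft}$ is generated by finite coverings by rational subdomains, and since every covering sieve is refined by a standard rational covering $\cU=(U_i)_{1\le i\le n}$ coming from elements $f_1,\dots,f_n\in A$ generating the unit ideal, it suffices to check that for $X=\Spa(A,A^+)$ and such a covering the natural map $K^{\an}(A)\to \lim_{[p]\in\Delta}K^{\an}(\cU_p)$ is a weak equivalence of pro-spectra --- but this is precisely Theorem~\ref{thm:descent}. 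One should note that by \cite[Prop.~3.6]{Huber-gen} and Lemma~3.3(iii) there, each $\cO_X(U_i)$ (and each higher intersection) is again a complete Tate ring admitting a finite dimensional noetherian ring of definition, so Theorem~\ref{thm:descent} genuinely applies to all the affinoid rings that occur, and $K^{\an}$ takes values in $\Pro(\Sp^+)$ on the whole site.

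The one genuinely nontrivial reduction is passing from arbitrary covering sieves to standard rational coverings. Concretely: a covering sieve of $X$ in $\Ad_S^{\aff,\ft}$ contains some finite family of affinoid opens $\{W_\alpha\to X\}$ that already covers $X$ as a topological space; by the structure theory of adic spaces (e.g.~\cite[Sec.~3]{Huber}) this family can be refined by a single standard rational covering $\cU=(U_i)$ of the above form. Comparing the limit over the \v Cech nerve of $\cU$ with the limit over the sieve is then a formal cofinality/hypercompleteness argument: both coverings are finite, so all the \v Cech nerves involved are $N$-skeletal for a fixed $N$, hence the relevant limits in $\Pro(\Sp)$ are computed levelwise (see \cite[Lem.~2.1]{KSTI}), and the comparison reduces to the analogous statement for ordinary $K$-theory of the levelwise quotients, where it is standard. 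Alternatively, since the $U_i$ and their intersections are themselves affinoid of finite type over $S$ and each such is handled by Theorem~\ref{thm:descent}, one gets sheafiness for the Grothendieck pretopology of standard rational coverings, and the general sieve case follows by the usual reduction that a presheaf satisfying descent for a generating pretopology is a sheaf.

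The main obstacle I anticipate is bookkeeping rather than conceptual: one must make sure that every affinoid ring appearing in a \v Cech nerve --- not just the $U_i$ but all finite intersections $U_{i_0}\cap\dots\cap U_{i_p}$ --- still satisfies the hypotheses ``complete Tate ring with a finite dimensional noetherian ring of definition,'' so that Theorem~\ref{thm:descent} can be applied iteratively, and that the values really lie in the connective-bounded-below category $\Pro(\Sp^+)$ claimed in the statement. The remark preceding Lemma~\ref{lem:sheaves} already records exactly the needed permanence property of finite dimensional noetherian rings of definition under passage to rational subdomains, so this is safe; assembling these pieces gives the corollary.
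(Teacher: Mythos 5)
Your proposal is correct and follows essentially the same route as the paper: deduce the extension statement from Lemma~\ref{lem:sheaves} and the sheafification discussion, and reduce the sheaf condition to \v{C}ech descent for rational coverings, which is Theorem~\ref{thm:descent}. The one step you wave at (``the usual reduction that a presheaf satisfying descent for a generating pretopology is a sheaf'') is exactly what the paper makes precise via \cite[Prop.~C.1]{Hoyois-trace-formula} --- the sieves for which $K^{\an}$ satisfies descent \emph{after arbitrary base change} form a topology, and pullback-stability of rational coverings lets one check only the generating coverings themselves --- whereas your first suggested argument (refining a sieve and comparing limits by skeletality) is not the right mechanism and should be dropped in favour of your second alternative.
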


In particular, we can now make sense of the analytic $K$-theory $K^{\an}(X)$ for every adic space $X$ locally of finite type over the base $S$.

\begin{proof}
In view of the general discussion at the beginning of this subsection the second assertion follows from the first one and  Lemma~\ref{lem:sheaves}.
Now we prove the first one.
For each affinoid $X$ we define a family of sieves $\tau(X)$ on $X$ as follows: A sieve $U \hookrightarrow y(X)$ belongs to $\tau(X)$ if and only if the natural map 
\[
K^{\an}(X') \to K^{\an}(U \times_{y(X)} y(X'))
\]
is an equivalence in $\Pro(\Sp^{+})$ for every morphism $X' \to X$ in $\Ad^{\aff,\ft}_{S}$. Then the association $X \rightsquigarrow \tau(X)$ defines a topology on $\Ad^{\aff,\ft}_{S}$, see \cite[Prop.~C.1]{Hoyois-trace-formula}.
Since the topology on $\Ad^{\aff,\ft}_{S}$ is generated by rational coverings, it suffices to show that the sieve generated by a rational covering of $X$ belongs to $\tau(X)$. Since rational coverings are stable under pullback, it suffices to show that 
\[
K^{\an}(X) \to K^{\an}(|U_{\bullet}|)
\]
is an equivalence, where $|U_{\bullet}|  = \colim_{[p]\in\Delta^{op}} y(U_{[p]})$ is the realization of the \v{C}ech nerve of a rational covering  of $X$. Note here that $|U_{\bullet}| \hookrightarrow y(X)$ is precisely the sieve generated by that covering.
This equivalence is a consequence of Theorem~\ref{thm:descent}.
\end{proof}

\subsection{Properties of analytic \textit{K}-theory}

In this subsection we record some immediate consequences. We keep the assumptions on the affinoid adic base space $S$ as in the previous subsection.

\begin{prop} [{$\A^{1}$-invariance}] 
Let $X$ be an adic space locally of finite type over $S$, and let $\A^{1}_{X}$ denote the adic affine line over $X$. Then the natural map
\[
K^{\an}(X) \to K^{\an}(\A^{1}_{X})
\]
is an equivalence in $\Pro(\Sp^{+})$.
\end{prop}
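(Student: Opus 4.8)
The plan is to reduce $\A^1$-invariance for $K^{\an}$ over an arbitrary $X \in \Ad_S^{\lft}$ to the affinoid case, and then to deduce the affinoid case from the fundamental fibre sequence of Theorem~\ref{thm.weakfibresequence} together with pro-homotopy invariance (Corollary~\ref{cor:pro-homotopy-invariance}). First I would observe that since $K^{\an}$ is a sheaf on $\Ad_S^{\lft}$ (by the preceding corollary), and both $X$ and $\A^1_X$ can be covered by affinoids with the obvious compatibility (an affinoid covering $\{U_i\}$ of $X$ pulls back to the affinoid covering $\{\A^1_{U_i}\}$ of $\A^1_X$, with higher intersections $\A^1_{U_{i_0}\cap\dots\cap U_{i_p}}$), descent reduces the claim to the case $X = \Spa(A, A^+)$ affinoid with $A$ admitting a noetherian, finite dimensional ring of definition.

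For the affinoid case, the key point is the identification $\cO_{\A^1_X}(\A^1_X) \cong A\langle t\rangle$ together with pro-homotopy invariance. Concretely, Corollary~\ref{cor:pro-homotopy-invariance} gives that
\[
K^{\an}(A) \to \prolim{t \mapsto \pi t} K^{\an}(A\langle t \rangle)
\]
is a weak equivalence, but the adic affine line is the \emph{honest} Tate ring $A\langle t\rangle$, not the pro-object $\prolim{j} K^{\an}(A\langle \pi^{-j} t\rangle)$. So the real content is to upgrade pro-homotopy invariance to genuine homotopy invariance, i.e. to show that each transition map $K^{\an}(A\langle t\rangle) \to K^{\an}(A\langle t\rangle)$ in that pro-system (induced by $t \mapsto \pi t$, equivalently the inclusion $A\langle t\rangle \hookrightarrow A\langle \pi^{-1}t\rangle \cong A\langle t\rangle$) is already a weak equivalence, or directly that $K^{\an}(A) \to K^{\an}(A\langle t\rangle)$ is one. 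The cleanest route is again via the fundamental fibre sequence: writing $A = A_0[\frac1\pi]$ with $A_0$ noetherian of finite dimension, $A_0[t]$ is a noetherian, finite dimensional ring of definition of $A\langle t\rangle$, so Theorem~\ref{thm.weakfibresequence} applies to both $A$ and $A\langle t\rangle$. One then compares the two fibre sequences: $K^{\cont}(A_0) \to K^{\cont}(A_0[t])$ is a weak equivalence by homotopy invariance of $K^{\cont}$ (each $K(A_0/(\pi^n)) \to K(A_0/(\pi^n)[t])$ need \emph{not} be an equivalence, but the pro-system is, again using Weibel's conjecture and the Bass–Quillen style argument as in \cite[Sec.~5]{KSTI}), and $K^{\pi^\infty}(A_0 \on (\pi)) \to K^{\pi^\infty}(A_0[t] \on(\pi))$ is an equivalence by the genuine homotopy invariance of $K^{\pi^\infty}(- \on (\pi))$ built into its construction (the simplicial ring $R[\Delta_{\pi^j}]$ already absorbs the extra polynomial variable, so the realization is insensitive to $[t]$; alternatively use Lemma~\ref{lem:K-pi-infty-KH} and ordinary homotopy invariance of $KH$). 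By the five lemma for weak fibre sequences in $\Pro(\Sp)$, $K^{\an}(A) \to K^{\an}(A\langle t\rangle)$ is then a weak equivalence.

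The main obstacle is precisely this last comparison: one must check that $K^{\cont}$ and $K^{\pi^\infty}(-\on(\pi))$ genuinely satisfy homotopy invariance for the polynomial variable $t$ (as opposed to the weaker $\prolim{t\mapsto \pi t}$ statements recorded in the excerpt), and that these invariances are compatible across the fibre sequence. For $K^{\pi^\infty}(-\on(\pi))$ this is essentially formal from Lemma~\ref{lem:K-pi-infty-KH} plus homotopy invariance of $KH$ and of $K^{\pi^\infty}$ on $(\pi)$-torsion (which follows from the classical statement since, after inverting $\pi$, $K^{\pi^\infty} = KH$, and on the closed part one reduces to the $K$-dimension bound exactly as in the proof of Theorem~\ref{thm.weakfibresequence}). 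For $K^{\cont}$ one invokes that $\prolim{n} K(A_0/(\pi^n)) \to \prolim{n} K(A_0/(\pi^n)[t])$ is a weak equivalence — this is a pro-version of homotopy invariance that holds for all qcqs schemes after the pro-cdh results of \cite{KST-Weibel}, and is already implicit in the proof of Corollary~\ref{cor:pro-homotopy-invariance} via \cite[Lem.~5.13]{KSTI}. Assembling these, the result follows.
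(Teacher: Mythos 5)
There is a genuine gap, and it stems from a misidentification of the space involved. You assert that ``the adic affine line is the \emph{honest} Tate ring $A\langle t\rangle$'', but $\Spa(A\langle t\rangle, A^{+}\langle t\rangle)$ is the closed unit disc over $X$, not $\A^{1}_{X}$. The adic affine line is the ascending union of the discs of radius $|\pi|^{-j}$, i.e.\ as a sheaf $y(\A^{1}_{X}) \simeq \colim_{t\mapsto \pi t} y(X\langle t\rangle)$; since $K^{\an}$ is a sheaf, it sends this colimit to the limit $\lim_{t\mapsto\pi t}K^{\an}(A\langle t\rangle)$, which is \emph{exactly} the target of the pro-homotopy invariance statement in Corollary~\ref{cor:pro-homotopy-invariance}. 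This is the paper's entire proof (after the same reduction to the affinoid case that you make): no upgrade from the pro-statement to a genuine statement is needed, because the geometry of $\A^{1}_{X}$ already encodes the pro-system.

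The ``upgrade'' you then try to carry out is both unnecessary and unsupported. Disc-invariance $K^{\an}(A)\xrightarrow{\sim}K^{\an}(A\langle t\rangle)$ is a strictly stronger claim than $\A^{1}$-invariance, and the ingredients you invoke do not deliver it: the paper only records pro-homotopy invariance along $t\mapsto\pi t$ for $K^{\cont}$ (\cite[Lem.~5.13]{KSTI}) and for $K^{\pi^{\infty}}(-\on(\pi))$ (Lemmas~\ref{lem:pro-HI-K-pi-infty}, \ref{lem:K-pi-infty-KH}), never honest homotopy invariance for a polynomial/Tate variable. In particular the claim that $K^{\cont}(A_{0})\to K^{\cont}(A_{0}\langle t\rangle)=\prolim{n}K(A_{0}/(\pi^{n})[t])$ is a weak equivalence does not follow from pro-cdh descent or Weibel's conjecture; the pro-system of $NK$-groups of $A_{0}/(\pi^{n})$ over $n$ has no reason to vanish, and the whole design of the paper (the $\prolim{t\mapsto\pi t}$ formalism, the simplices $\Delta^{p}_{\pi^{j}}$) exists precisely because only the $\pi$-shrunken version is available. (A smaller slip: $A_{0}[t]$ is not a ring of definition of $A\langle t\rangle$, nor a complete $\pi$-adic ring as Theorem~\ref{thm.weakfibresequence} requires; one must take $A_{0}\langle t\rangle$.) Your reduction to the affinoid case via the sheaf property is fine and agrees with the paper; the affinoid step should simply identify $y(\A^{1}_{X})$ with $\colim_{t\mapsto\pi t}y(X\langle t\rangle)$ and quote Corollary~\ref{cor:pro-homotopy-invariance}.
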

\begin{proof}
It suffices to check this when $X = \Spa(A, A^{+})$ is affinoid. Let 
$X\langle t\rangle = \Spa(A\langle t\rangle, A^{+}\langle t\rangle)$.
The sheaf (of spaces) $y(\A^{1}_{X})$ is equivalent to  
$\colim_{t \mapsto \pi t} y(X\langle t\rangle)$.
Hence
\[
K^{\an}(\A^{1}_{X}) \simeq \lim_{t \mapsto \pi t} K^{\an}(X\langle t\rangle) \simeq K^{\an}(X)
\]
where the second equivalence follows from Corollary~\ref{cor:pro-homotopy-invariance}.
\end{proof}

Recall that the $K$-module structure of $\Kan$ as a functor on affinoids provides a natural transformation $\mycup t \colon \Kan \to \Omega\Kan(\underline{\phantom{x}}\langle t,t^{-1}\rangle)$. It induces a natural transformation on the extension of $\Kan$ to all adic spaces in $\Ad_{S}^{\lft}$.

\begin{prop} [Projective line]
Let $X$ be an adic space locally of finite type over $S$, and denote by $\bP^{1}_{X}$ the adic projective line over $X$. We have a natural weak equivalence
\[
\Kan(\bP^{1}_{X}) \simeq \Kan(X) \oplus \Kan(X).
\]
\end{prop}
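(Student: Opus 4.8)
The plan is to reduce to an affinoid base by descent, cover the adic projective line by its two standard closed-disc charts, feed the resulting Mayer--Vietoris square into the analytic Bass construction, and conclude with Corollary~\ref{cor:BassFT}. First I would reduce to the case $X = \Spa(A, A^+)$ affinoid. Both $X \mapsto K^{\an}(\bP^1_X)$ and $X \mapsto K^{\an}(X) \oplus K^{\an}(X)$ are $\Pro(\Sp^{+})$-valued sheaves on $\Ad^{\lft}_S$: for the former this is because $\bP^1_{(-)}$ is compatible with open immersions and fibre products, so a covering $\{X_i \to X\}$ induces a covering $\{\bP^1_{X_i} \to \bP^1_{X}\}$ with \v{C}ech nerve $\bP^1_{X_\bullet}$, and $K^{\an}$ is a sheaf. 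By Lemma~\ref{lem:sheaves} it then suffices to produce a natural weak equivalence for affinoid $X$; recall that in that case $A$ admits a finite-dimensional noetherian ring of definition.

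So let $X = \Spa(A, A^+)$. The closed unit discs $U_0 = \Spa(A\langle t\rangle, \dots)$ around $0$ and $U_\infty = \Spa(A\langle t^{-1}\rangle, \dots)$ around $\infty$ form an open covering of $\bP^1_X$ with $U_0 \cap U_\infty = \Spa(A\langle t, t^{-1}\rangle, \dots)$, so \v{C}ech descent for the sheaf $K^{\an}$ along this two-element covering identifies $K^{\an}(\bP^1_X)$, via the pair of chart restrictions, with $\fib\bigl(K^{\an}(A\langle t\rangle) \oplus K^{\an}(A\langle t^{-1}\rangle) \xrightarrow{\partial} K^{\an}(A\langle t, t^{-1}\rangle)\bigr)$, where $\partial$ is the difference of the two restriction maps. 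On the other hand, in the stable $\infty$-category $\Pro(\Sp^{+})$ the pushout $P = K^{\an}(A\langle t\rangle) \sqcup_{K^{\an}(A)} K^{\an}(A\langle t^{-1}\rangle)$ sits, along the two structure maps, in a cofibre sequence $K^{\an}(A) \to K^{\an}(A\langle t\rangle) \oplus K^{\an}(A\langle t^{-1}\rangle) \xrightarrow{q} P$, which is also a fibre sequence, and by definition $\Lambda K^{\an}(A) = \fib(P \xrightarrow{r} K^{\an}(A\langle t, t^{-1}\rangle))$; up to the sign on one summand $rq = \partial$, so $\fib(rq) \simeq K^{\an}(\bP^1_X)$.

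Applying the fibre sequence $\fib(q) \to \fib(rq) \to \fib(r)$ attached to the composable pair $(q, r)$, I obtain a weak fibre sequence of pro-spectra
\[
K^{\an}(A) \xrightarrow{\mu} K^{\an}(\bP^1_X) \longrightarrow \Lambda K^{\an}(A),
\]
in which $\mu$ becomes, after restricting to the two charts, the pair of structure-map pullbacks $K^{\an}(A) \to K^{\an}(A\langle t^{\pm 1}\rangle)$; in particular $\mu$ agrees on $U_0$ with pullback along $U_0 \to X$. Hence the zero section $s\colon X \xrightarrow{t \mapsto 0} U_0 \hookrightarrow \bP^1_X$ satisfies $s^{*}\mu = \id$, so $\mu$ is a split monomorphism and $K^{\an}(\bP^1_X) \simeq K^{\an}(A) \oplus \Lambda K^{\an}(A)$. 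Finally $\lambda\colon K^{\an}(A) \to \Lambda K^{\an}(A)$ is a weak equivalence by Corollary~\ref{cor:BassFT}, giving $K^{\an}(\bP^1_X) \simeq K^{\an}(X) \oplus K^{\an}(X)$; since all ingredients (structure map, zero section, $\lambda$, the descent equivalences) are natural in $X$, the equivalence is natural and, by the first step, extends to all $X \in \Ad^{\lft}_S$.

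The step I expect to be the main obstacle is the matching in the second paragraph: identifying the Mayer--Vietoris square with the defining (co)fibre sequences of $\Lambda K^{\an}(A)$, with the correct signs, so that $\fib(q) \to \fib(rq) \to \fib(r)$ genuinely reads $K^{\an}(A) \to K^{\an}(\bP^1_X) \to \Lambda K^{\an}(A)$ with first map the pullback along $\bP^1_X \to X$; once this is in place, the zero section visibly splits the sequence. One should also note that working with weak equivalences and weak fibre sequences of pro-spectra is harmless here, since only finite (co)limits and the octahedral fibre sequence enter, and these are compatible with weak equivalences.
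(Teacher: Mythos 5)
Your proposal is correct and follows essentially the same route as the paper: reduce to affinoid $X$, cover $\bP^1_X$ by $X\langle t\rangle$ and $X\langle t^{-1}\rangle$, match the resulting descent fibre sequence against the defining (co)fibre sequences of $\Lambda K^{\an}(A)$ to get a split weak fibre sequence $K^{\an}(X) \to K^{\an}(\bP^1_X) \to \Lambda K^{\an}(X)$, and conclude with Corollary~\ref{cor:BassFT}. The only difference is organizational: the paper compares two fibre sequences and identifies $\cofib(p^{*}) \simeq \cofib(q^{*}) \simeq \Lambda K^{\an}(A)$, while you use the octahedral fibre sequence $\fib(q) \to \fib(rq) \to \fib(r)$ and split via the zero section, which amounts to the same computation.
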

\begin{proof}
We may again assume that $X$ is affinoid.
Consider the  covering of $\bP^{1}_{X}$ given by 
$X\langle t\rangle, X\langle t^{-1}\rangle $. 
Denote the projection $\bP^{1}_{X} \to X$ by $p$ and let $q:=p|_{X\langle t,t^{-1}\rangle}$.
Since $\Kan$ is a sheaf, the upper line in the following diagram is a fibre sequence.
\[
\xymatrix{
\Omega \Kan(X\langle t,t^{-1}\rangle) \ar[r]   & \Kan(\bP^{1}_{X}) \ar[r]    & \Kan(X\langle t\rangle) \oplus \Kan(X\langle t^{-1}\rangle) \\
\Omega\Gamma \Kan(X) \ar[u]^{q^{*}} \ar[r] & \Kan(X) \ar[u]^{p^{*}} \ar[r]    & \Kan(X\langle t\rangle) \oplus \Kan(X\langle t^{-1}\rangle) \ar[u]^{\simeq}
}
\]
Here
\[
\Gamma \Kan (X) = \Kan(X\langle t\rangle) \sqcup_{K^{\an}(X)} \Kan(X\langle t^{-1}\rangle)
\]
The lower line is a fibre sequence by definition of $\Gamma \Kan(X)$.  Since $p^{*}$ is split, we have an equivalence 
$\Kan(\bP^{1}_{X}) \simeq \Kan(X) \oplus \cofib(p^{*})$. 
From the diagram we get a weak equivalence $\cofib(q^{*}) \simeq \cofib(p^{*})$. By definition of $\Lambda K^{\an}$ (see Subsection~\ref{sec:fund-fib}) we have an equivalence $\cofib(q^{*}) \simeq \Lambda K^{\an}(A)$.
By Corollary~\ref{cor:BassFT} the composite 
\[
\Kan(X) \xrightarrow{\mycup t} \Omega \Kan(X\langle t,t^{-1}\rangle) \to \Lambda \Kan(X) \simeq \cofib(q^{*}) \simeq \cofib(p^{*})
\]
is a weak equivalence. Thus this map together with $p^{*}$ induces the desired weak equivalence $\Kan(X) \oplus \Kan(X) \xrightarrow{\simeq} \Kan(\bP^{1}_{X})$.
\end{proof}

We can globalize Theorem~\ref{thm:fund-fib-model} as follows.
Recall that $(R,R^{+})$ denotes an affinoid Tate ring such that $R$ admits a noetherian, finite dimensional ring of definition $R_{0}$, $\pi \in R_{0}$ is a pseudo-uniformizer, and $S=\Spa(R,R^{+})$.
Denote by $\Sch_{R_{0}}^{\ft}$ the category of  $R_{0}$-schemes of  finite type.
There is a functor 
\[
\Sch_{R_{0}}^{\ft} \to \Ad_{S}^{\lft}, \quad X \mapsto X^{\ad},
\]
which on affine  schemes is given by $\Spec(A_{0}) \mapsto \Spa(A,A^{\circ})$ with $A=A_{0}^{\wedge}[\frac{1}{\pi}]$ and $A^{\circ} \subseteq A$ the subring of powerbounded elements (compose the $\pi$-adic completion functor, the functor $t$ from \cite[Prop.~4.1]{Huber-gen}, which sends $\Spf(A_{0}^{\wedge})$ to $\Spa(A_{0}^{\wedge},A_{0}^{\wedge})$, and the functor $Y \mapsto Y(\pi\not=0)$).

\begin{thm}
For $X$ a scheme of finite type over $R_{0}$, there is a weak fibre sequence of pro-spectra
\[
K^{\pi^{\infty}}(X \on (\pi)) \to K^{\cont}(X) \to K^{\an}(X^{\ad}).
\]
If there exists an admissible morphism $\widetilde X \to X$ with $\widetilde X$ regular, then there is  a weak fibre sequence
\[
K(X \on (\pi)) \to K^{\cont}(X) \to K^{\an}(X^{\ad}).
\]
\end{thm}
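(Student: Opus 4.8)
The plan is to get the first weak fibre sequence from Theorem~\ref{thm.weakfibresequence} by reducing to the affine case, and then the second from the first by a pro cdh argument that uses the regular model $\widetilde X$.

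For the first sequence I would first check that all three functors $X \mapsto K^{\cont}(X)$, $X \mapsto K^{\pi^{\infty}}(X \on (\pi))$, $X \mapsto K^{\an}(X^{\ad})$ are Zariski sheaves on $\Sch_{R_{0}}^{\ft}$. For the last one this is the sheaf property established above, applied to the affinoid covering of $X^{\ad}$ induced by an affine covering of the noetherian (hence qcqs) scheme $X$; for the first two it follows from Thomason's Zariski descent for $K$-theory with supports, since the Čech nerve of a finite affine covering gives finite limits, which commute with the pro-limits and geometric realizations defining $K^{\cont}$ and $K^{\pi^{\infty}}(- \on (\pi))$. We may thus assume $X = \Spec(B_{0})$ with $B_{0}$ of finite type over $R_{0}$; then $B_{0}$ — and hence its $\pi$-adic completion $B_{0}^{\wedge}$ — is noetherian and of finite Krull dimension (the $\pi$-adic completion of a noetherian ring does not raise the dimension). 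Applying Thomason's excision \cite[Prop.~3.19]{thomason} levelwise in $p$ and $j$ gives $K^{\cont}(\Spec B_{0}) \simeq K^{\cont}(B_{0}^{\wedge})$ and $K^{\pi^{\infty}}(\Spec B_{0} \on (\pi)) \simeq K^{\pi^{\infty}}(B_{0}^{\wedge} \on (\pi))$, while $K^{\an}((\Spec B_{0})^{\ad}) = K^{\an}(B_{0}^{\wedge}[\tfrac{1}{\pi}])$ by definition of $X \mapsto X^{\ad}$. The first sequence is then Theorem~\ref{thm.weakfibresequence} for $B_{0}^{\wedge}$, and it is functorial in the ring, so the affine sequences assemble back to $X$.

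For the second sequence I would apply the first part to $\widetilde X$ (again of finite type over $R_{0}$, as $\widetilde X \to X$ is proper), obtaining a weak fibre sequence $K^{\pi^{\infty}}(\widetilde X \on (\pi)) \to K^{\cont}(\widetilde X) \to K^{\an}(\widetilde X^{\ad})$. Two simplifications: an admissible morphism of schemes induces an admissible formal blow-up on $\pi$-adic completions, and admissible blow-ups leave the associated adic space unchanged (Raynaud; see \cite{Huber-book}), so $\widetilde X^{\ad} = X^{\ad}$; and since $\widetilde X$, hence also $\widetilde X[\tfrac{1}{\pi}]$, is regular and $\widetilde X[\Delta^{p}_{\pi^{j}}] \cong \A^{p}_{\widetilde X}$ as a scheme, homotopy invariance of $K$-theory for regular noetherian schemes makes the defining simplicial spectrum (for each $j$) and pro-system of $K^{\pi^{\infty}}(\widetilde X \on (\pi))$ levelwise constant — exactly as in the proof of Lemma~\ref{lem:K-pi-infty-KH}, with ``regular'' replacing ``$\pi$ invertible'' — so that $K^{\pi^{\infty}}(\widetilde X \on (\pi)) \simeq K(\widetilde X \on (\pi))$. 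Thus there is already a weak fibre sequence $K(\widetilde X \on (\pi)) \to K^{\cont}(\widetilde X) \to K^{\an}(X^{\ad})$, and it remains to transfer the left-hand term from $\widetilde X$ to $X$. For this I would compare the two squares of pro-spectra $\big[K(X \on (\pi)) \to K^{\cont}(X)\,;\, K(\widetilde X \on (\pi)) \to K^{\cont}(\widetilde X)\big]$ and $\big[K^{\pi^{\infty}}(X \on (\pi)) \to K^{\cont}(X)\,;\, K^{\pi^{\infty}}(\widetilde X \on (\pi)) \to K^{\cont}(\widetilde X)\big]$, related by the natural transformation $K(- \on (\pi)) \to K^{\pi^{\infty}}(- \on (\pi))$ and the identity on the $K^{\cont}$-corners. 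The second square is weakly cartesian because the first-part fibre sequences for $X$ and $\widetilde X$ map to one another over $K^{\an}(X^{\ad}) = K^{\an}(\widetilde X^{\ad})$ with an equivalence on the right term. The first square is weakly cartesian by pro cdh descent \cite[Thm.~A]{KST-Weibel} — which yields that $\big[K(X) \to K^{\cont}(X)\,;\,K(\widetilde X) \to K^{\cont}(\widetilde X)\big]$ is weakly cartesian — taken fibrewise against the (weakly cartesian) square $\big[K(X[\tfrac{1}{\pi}]) \to 0\,;\,K(\widetilde X[\tfrac{1}{\pi}]) \to 0\big]$, using $K(X[\tfrac{1}{\pi}]) \simeq K(\widetilde X[\tfrac{1}{\pi}])$ (as $\widetilde X \to X$ is an isomorphism over $X[\tfrac{1}{\pi}]$). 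The comparison of squares is an equivalence on the two $K^{\cont}$-corners and, by the previous step, on the $\widetilde X \on (\pi)$-corner; as both squares are weakly cartesian it is then an equivalence on the remaining corner, giving $K(X \on (\pi)) \xrightarrow{\simeq} K^{\pi^{\infty}}(X \on (\pi))$ compatibly with the maps to $K^{\cont}(X)$. Feeding this into the first part for $X$ gives the asserted weak fibre sequence.

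The step I expect to be the main obstacle is this last transfer: choosing the right pro cdh square and verifying that the comparison of the two weakly cartesian squares is compatible, so that the identification $K(X \on (\pi)) \simeq K^{\pi^{\infty}}(X \on (\pi))$ respects the maps into $K^{\cont}(X)$ — which is what makes the final diagram a genuine fibre sequence rather than only a levelwise equivalence. The inputs needed beyond Theorem~\ref{thm.weakfibresequence} are pro cdh descent for algebraic $K$-theory \cite{KST-Weibel}, Thomason's excision, and the invariance of the adic generic fibre under admissible modifications.
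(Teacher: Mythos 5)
Your proposal is correct, and the two halves sit differently relative to the paper. For the first weak fibre sequence your argument is essentially the paper's: equip $\Sch_{R_0}^{\ft}$ with the Zariski topology, observe that all three functors are sheaves, and reduce to the affine case, which is Theorem~\ref{thm.weakfibresequence}; you add the (correct, and in fact glossed over in the paper) step of passing from $B_0$ to its $\pi$-adic completion via Thomason's excision for $K^{\cont}$ and $K^{\pi^{\infty}}(-\on(\pi))$. For the second sequence you take a genuinely different route: the paper simply cites \cite[Cor.~4.7]{KSTI}, which asserts directly that $K(X\on(\pi))\to\prolim{j}K(X\otimes_{R_0}R_0[\Delta^p_{\pi^j}]\on(\pi))$ is a weak equivalence for each $p$ when $X$ admits a regular admissible model, and then passes to realizations using the connectivity bounds from Weibel's conjecture. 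You instead reprove the needed equivalence $K(X\on(\pi))\simeq K^{\pi^{\infty}}(X\on(\pi))$ from scratch: homotopy invariance on the regular model $\widetilde X$ gives $K^{\pi^{\infty}}(\widetilde X\on(\pi))\simeq K(\widetilde X\on(\pi))$, pro cdh descent \cite[Thm.~A]{KST-Weibel} applied to the abstract blowup square of $\widetilde X\to X$ along $V(\pi)$ (taken fibrewise against the generic-fibre square) makes the $K(-\on(\pi))$-square weakly cartesian, the first part of the theorem together with $\widetilde X^{\ad}=X^{\ad}$ makes the $K^{\pi^{\infty}}(-\on(\pi))$-square weakly cartesian, and a three-out-of-four comparison finishes. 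This is a valid argument and has the virtue of being self-contained modulo pro cdh descent, whereas the paper's proof is shorter by outsourcing exactly this mechanism to part~I; your route also makes visible why the regular model is needed. The one point requiring care — which you correctly flag — is that the equivalence you produce on the fourth corner is the natural augmentation $K(X\on(\pi))\to K^{\pi^{\infty}}(X\on(\pi))$, so its compatibility with the maps to $K^{\cont}(X)$ reduces to checking once and for all that this augmentation commutes with the structure maps to $K^{\cont}$ coming from the Bass construction in Theorem~\ref{thm.weakfibresequence}; this is true but is the same implicit compatibility the paper relies on when invoking \cite[Cor.~4.7]{KSTI}.
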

\begin{proof}
We equip $\Sch_{R_{0}}^{\ft}$ with the Zariski topology. 
Considered as functors on $\Sch_{R_0}^{\ft,op}$, all pro-spectra appearing in the statement of the Theorem are sheaves (see the proof of Theorem~\ref{thm:descent} for the cases of $K^{\pi^{\infty}}(\underline{\phantom{x}}\suppi)$ and $\Kcont$). Since every (regular) $X \in \Sch^{\ft}_{R_0}$ 
is a colimit of (regular) affine schemes in the category of sheaves $\Sh(\mathbf{Sch}^{\ft}_{R_0})$, it suffices to check the first statement in the affine case. But this is Theorem~\ref{thm.weakfibresequence}. 

If there exists an admissible morphism $\widetilde X \to X$ with $\widetilde X$ regular, then \cite[Cor.~4.7]{KSTI} implies that 
for every $p$, the map 
\[
K(X \on (\pi)) \to  \prolim{j} K(X\otimes_{R_{0}} R_{0}[\Delta^{p}_{\pi^{j}}] \on (\pi))
\]
is a weak equivalence of pro-spectra.
Using arguments similar to those in the proof of Theorem~\ref{thm.weakfibresequence},  Weibel's $K$-dimension conjecture implies that $K^{\pi^{\infty}}(X \on (\pi))$ is weakly equivalent to $K(X \on (\pi))$. This establishes the second claim.
\end{proof}

\bibliographystyle{amsalpha}
\bibliography{Kanalytic}

\end{document}